\title{Existence of piecewise weak solutions of a discrete Cucker-Smale's flocking model with a singular communication weight}
\author{Jan Peszek\\ 
{\it\small Institute of Applied Mathematics and Mechanics, University of Warsaw,}\\
{\it\small ul. Banacha 2, 02-097 Warsaw, Poland}\\
{\it\small E-mail: j.peszek@mimuw.edu.pl}
}
\date{}
\renewcommand{\it}{\sl}
\renewcommand{\em}{\sl}
\newcommand{\barint}{
         \rule[.036in]{.12in}{.009in}\kern-.16in
          \displaystyle\int  }
\def\r{{\mathbb{R}}}
\begin{document}

\newtheorem{theo}{\bf Theorem}[section]
\newtheorem{coro}{\bf Corollary}[section]
\newtheorem{lem}{\bf Lemma}[section]
\newtheorem{rem}{\bf Remark}[section]
\newtheorem{defi}{\bf Definition}[section]
\newtheorem{ex}{\bf Example}[section]
\newtheorem{fact}{\bf Fact}[section]
\newtheorem{prop}{\bf Proposition}[section]
\newtheorem{prob}{\bf Problem}[section]

\makeatletter \@addtoreset{equation}{section}
\renewcommand{\theequation}{\thesection.\arabic{equation}}
\makeatother

\newcommand{\ds}{\displaystyle}
\newcommand{\ts}{\textstyle}
\newcommand{\ol}{\overline}
\newcommand{\wt}{\widetilde}
\newcommand{\ck}{{\cal K}}
\newcommand{\ve}{\varepsilon}
\newcommand{\vp}{\varphi}
\newcommand{\pa}{\partial}
\newcommand{\rp}{\mathbb{R}_+}
\newcommand{\hh}{\tilde{h}}
\newcommand{\HH}{\tilde{H}}
\newcommand{\cp}{{\rm cap}^+_M}
\newcommand{\hes}{\nabla^{(2)}}
\newcommand{\nn}{{\cal N}}
\newcommand{\dix}{\nabla_x\cdot}
\newcommand{\dm}{{\rm div}_m}
\newcommand{\di}{{\rm div}}
\newcommand{\pxi}{\partial_{x_i}}
\newcommand{\pmi}{\partial_{m_i}}

\maketitle

\begin{abstract}
We prove existence of global $C^1$ piecewise weak solutions for the discrete Cucker-Smale's flocking model with the communication weight \[\psi(s)=s^{-\alpha}, 0<\alpha<1.\] We also discuss the possibility of finite in time alignment of the velocities of the particles.
\end{abstract}

\section{Introduction}
We consider the Cucker-Smale's (C-S) flocking model describing a collective self-driven motion of self-propelled particles, which for some reason have a tendency to flock, such as flockings of birds, schoolings of fishes or concentration of network activity. The purpose of this paper is to prove existence of piecewise weak solutions for the C-S model with an unbounded and non-Lipschitz communication weight. In the classical, discrete C-S model we assume that we have exactly $N$ particles in some unspecified space and that $(x_i,v_i)$ denotes the position and velocity of $i$-th particle, thus for $i=1,...,N$, we have
\begin{eqnarray}\label{cs1}
\frac{dx_i}{dt}=v_i.
\end{eqnarray}
Moreover we know that velocity of each particle changes in time according to the equation
\begin{eqnarray}\label{cs2}
\frac{dv_i}{dt}=\frac{1}{N}\sum_{j=1}^N\psi(|x_j-x_i|)(v_j-v_i),
\end{eqnarray}
where $\psi:[0,\infty)\to[0,\infty)$ is a given function called  the {\it communication weight} or the {\it communication rate}. The communication weight may be interpreted as perception of the particles. Indeed, we have
\begin{eqnarray*}
\frac{1}{N}\sum_{j=1}^N\psi(|x_j-x_i|)(v_j-v_i)=\frac{1}{N}\sum_{j=1}^N\psi(|x_j-x_i|) \left[\frac{\sum_{j=1}^N\psi(|x_j-x_i|)v_j}{\sum_{j=1}^N\psi(|x_j-x_i|)}-v_i\right],
\end{eqnarray*}
thus $v_i$ evolves towards $\frac{\sum_{j=1}^N\psi(|x_j-x_i|)v_j}{\sum_{j=1}^N\psi(|x_j-x_i|)}$, which is a convex combination of velocities $v_j$ and can be interpreted as the avarage velocity of the particles percieved by $i$-th particle. For instance if $\psi\equiv 1$ then it is exactly the avarage velocity of the particles and if $\psi\equiv 0$ then the particles move independently with constant velocity as if they did not see each other.

\noindent
As the number of particles grows to infinity, i.e., $N\to\infty$, the discrete model is replaced by the following Vlasov-type equation:
\begin{eqnarray}\label{cscont}
\partial_tf+v\cdot\nabla f+{\rm div}_v(F(f)f)=0,\ \ x\in\r^d,\ v\in\r^d,\\
F(f)(x,v,t):=\int_{\r^{2d}}\psi(|y-x|)(w-v)f(y,w,t)dwdy\nonumber,
\end{eqnarray}
where $f=f(x,v,t)$ is a density of particles that at the time $t$ have position $x$ and velocity $v$.
\subsection{Smooth communication weight}
The most well known communication weight is a bounded and smooth one given by
\begin{eqnarray}\label{cucu}
\psi_{cs}(s)=\frac{K}{(1+s^2)^\frac{\beta}{2}}, \ \ \ \beta\geq 0,\ \ \  K>0,
\end{eqnarray}
or more generally $\psi_{cs}$-- bounded and Lipschitz continuous.
The C-S model with weight $\psi_{cs}$ was introduced in 2007 by Cucker and Smale in \cite{cuc1} and was in some sense based on the paper by Viscek \cite{vic} from 1995 in which a model of flocking was introduced, such that each particle adjusted it's velocity with respect to the avarage velocity of it's neighbors. Since then existence, uniqueness, asymptotics and stability for alignment models similar to C-S (both in continuous and discrete cases) were extensively studied, both from physical and biological point of view \cite{deg1} -\cite{deg4}, \cite{le}, \cite{park} - \cite{topaz} and from more theoretical point of view  \cite{aha1, aha2}, \cite{bocan} - \cite{cuc4}, \cite{dua1} - \cite{halaruslem}, \cite{ka, mo}. A nice and thorough study of the C-S flocking model with a bounded communication weight can be found in \cite{hatad}, where the interplay between dicrete and continuous model is studied with measure valued solutions of the Vlasov type equation (\ref{cscont}) or in \cite{haliu}, where the authors present a new, simple aproach to the problem of existence and asymptotics. Recently other direction of studies was introduced - namely to couple a continuous C-S model treated as a Vlasov equation with Navier-Stokes system to model the motion of fluid imbeded particles. This approach is based on the paper by Beduin \cite{org} and can be found in \cite{bae}.
\subsection{Singular communication weight}\label{int}
Our main interest is the C-S model with weight
\begin{eqnarray}\label{psi}
\psi(s)=
\left\{
\begin{array}{ccc}
s^{-\alpha}&{\rm for}& s>0,\\
0&{\rm for}& s=0,
\end{array}
\ \ \ \ \ \ \alpha\in(0,d).\right.
\end{eqnarray}
\noindent
With the loss of Lipschitz continuity of $\psi$ the problem of existence and uniqueness for the discrete C-S model becomes more difficult. To our best knowledge there are no results in this direction, even though the are results on asymptotics in such case, see e.g. \cite{haliu}. When dealing with the C-S model with bounded weight $\psi_{cs}$ one makes use of Lipschitz continuity of $\psi_{cs}$ as well as the structure of the model itself. As an example we will now present a simple aplication of the properties of the structure of our model. Namely we will prove that the avarage velocity of the particles
\begin{eqnarray*}
\bar{v}(t):=\frac{1}{N}\sum_{i=1}^Nv_i(t)
\end{eqnarray*}
 is constant in time. Assuming that $x=(x_1,...,x_N)$ and $v=(v_1,...,v_N)$ is a sufficiently smooth solution of (\ref{cs1}) and (\ref{cs2}), we calculate the derivative of $\bar{v}$ to get
\begin{eqnarray*}
\frac{d}{dt}\sum_{i=1}^Nv_i=\frac{1}{N}\sum_{i,k=1}^N(v_k-v_i)\psi_n(|x_i-x_k|)=\\
\frac{1}{2N}\sum_{i,k=1}^N(v_k-v_i)\psi_n(|x_i-x_k|)+ \frac{1}{2N}\sum_{i,k=1}^N(v_i-v_k)\psi_n(|x_i-x_k|)=0,
\end{eqnarray*}
where the latter summant in the second line is obtained by substituting $i$ and $k$. Clearly each such structure based property of the C-S model will remain true regardless of the communication weight $\psi$ as long as it is a nonnegative function. This is the first piece of information on which we base our hope to obtain some existence for C-S model with singular weight $\psi$. The second piece of information is that Lipschitz continuity and boundedness of $\psi$ fails only at $0$, which means that our main problem will be to prove existence in a neighborhood of each time $t_0$ at which some particles collide. However, heuristically in a neighborhood of each such point we have
\begin{eqnarray*}
x_i(t)-x_j(t)\approx t(v_i(t_0)-v_j(t_0))\approx t(v_i(t)-v_j(t))
\end{eqnarray*}
and since in (\ref{cs2}) the function $t\mapsto\psi(|x_i(t)-x_j(t)|)$ comes always multiplied by $v_i(t)-v_j(t)$, we have
{\small
\begin{eqnarray*}
\left(\Psi(|x_i(t)-x_j(t)|)\right)^{'}=\psi(|x_i(t)-x_j(t)|)\frac{(x_i(t)-x_j(t))\cdot(v_i(t)-v_j(t))}{|x_i(t)-x_j(t)|}\approx \psi(|x_i(t)-x_j(t)|)(v_i(t)-v_j(t))
\end{eqnarray*}
}
with $\Psi(s):=\frac{1}{1-\alpha}s^{1-\alpha}$ being a primitive of $\psi$, which is a H\" older continuous function, thus there is hope for some better regularity of $v$. These two observations were already used in \cite{haliu} to obtain asymptotic flocking for C-S model with weight $\psi$. Occurrence of asymptotic flocking is a further clue that a C-S model with singular weight inherits some nice properties from the model with a smooth weight. Lastly in \cite{haliu} existence for the discrete model served as a mean to obtain existence for the continuous model by defining the solution of the continuous model as a Wasserstein metric's limit of approximative discrete solutions. What is interesting from the point of view of the model with singular weight is that the existence of such limit was ensured by structure only, which means that it should work also in our case. Of course existence of the limit is not enough to prove that the limit actually satisfies any equation at all but it is a first step. In \cite{haliu} it was the Lipschitz continuity of $\psi_{cs}$ that served to prove that this limit was indeed a solution of the continuous C-S model, which indicates that this may be the most difficult part in case of the singular communication weight.

\subsection{Preliminaries and notation}
The definition of our piecewise weak solutions and the proof of their existence and regularity can be found in Chapter 2. In Chapter 3, we restrict ourselves to the case of two particles and present necessary and sufficient conditions on the initial data for the trajectories of the particles to stick together in a finite time.\\
\noindent
Hereinafter $x=(x_1,...,x_N)\in\r^{Nd}$, where $x_i=(x_{i,1},...,x_{i,d})$ denotes the position of the particles, $v=\dot{x}$ is their velocity, where $N$ and $d$ are the number of the particles and the dimension of the space. Approximate solutions $x^n$ and $v^n$ are defined in section \ref{ap}. Moreover by $B_i(t)$ we will denote the set of all indexes $j$, such that the trajectory of $x_j$ does not coincide with the trajectory of $x_i$ as of the time $t$. Assuming that the trajectories, once coinciding cannot separate, we may define it as
\begin{eqnarray}\label{bi}
B_i(t):=\{k=1,...,N: x^n_k(t)\neq x^n_i(t)\ {\rm or}\ v^n_k(t)\neq v^n_i(t)\},
\end{eqnarray}
since any two particles with sufficiently smooth trajectories have the same position and velocity at the time $t$, if and only if they move on the same trajectory. Further, by $W^{k,p}(\Omega)$ we denote the Sobolev space of the functions with up to $k$-th weak derivative belonging to the space $L^p(\Omega)$ and by $C^\infty_c(\Omega)$ we denote the space of smooth and compactly supported functions.\\
\noindent
In the sequel we use the following lemmas, which are well known results from measure theory.
\begin{lem}[Vitali's convergence theorem]\label{vit}
Let $\Omega\subset\r^d$ be bounded and $f_n$ be a sequence of functions converging a.e. to an a.e. finite function $f$. Then
if $f_n$ are uniformly integrable, then $f\in L^1(\Omega)$ and
\begin{eqnarray*}
\lim_{n\to\infty}\int_\Omega|f_n-f|d\mu = 0.
\end{eqnarray*}
\end{lem}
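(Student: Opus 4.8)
The plan is to combine Egorov's theorem with the uniform integrability hypothesis, which is the standard route to Vitali's theorem on a finite measure space (note that $\mu(\Omega)<\infty$ since $\Omega$ is bounded). Recall that uniform integrability means: for every $\ve>0$ there is $\delta>0$ such that $\mu(E)<\delta$ implies $\sup_n\int_E|f_n|\,d\mu<\ve$.

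First I would establish that $f\in L^1(\Omega)$. Since $\mu(\Omega)<\infty$, I can cover $\Omega$ by finitely many disjoint measurable sets each of measure less than the $\delta$ associated to $\ve=1$; summing the uniform-integrability estimate over these pieces gives $\sup_n\int_\Omega|f_n|\,d\mu<\infty$. Because $|f_n|\to|f|$ a.e., Fatou's lemma then yields $\int_\Omega|f|\,d\mu\le\liminf_n\int_\Omega|f_n|\,d\mu<\infty$, so $f\in L^1(\Omega)$. The same Fatou argument applied on an \emph{arbitrary} measurable set $E$ with $\mu(E)<\delta$ shows $\int_E|f|\,d\mu\le\liminf_n\int_E|f_n|\,d\mu\le\ve$, so the limit $f$ inherits the smallness estimate from the sequence.

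The core of the argument is the following splitting. Fix $\ve>0$ and choose the corresponding $\delta>0$ from uniform integrability. By Egorov's theorem there is a measurable set $A\subset\Omega$ with $\mu(\Omega\setminus A)<\delta$ on which $f_n\to f$ uniformly. Writing
\begin{eqnarray*}
\int_\Omega|f_n-f|\,d\mu=\int_A|f_n-f|\,d\mu+\int_{\Omega\setminus A}|f_n-f|\,d\mu,
\end{eqnarray*}
the first term is bounded by $\mu(\Omega)\sup_A|f_n-f|$, which tends to $0$ as $n\to\infty$ by uniform convergence, while the second is bounded by $\int_{\Omega\setminus A}|f_n|\,d\mu+\int_{\Omega\setminus A}|f|\,d\mu\le 2\ve$ using uniform integrability together with the inherited estimate for $f$ and the fact that $\mu(\Omega\setminus A)<\delta$. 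Hence $\limsup_n\int_\Omega|f_n-f|\,d\mu\le 2\ve$, and letting $\ve\to0$ finishes the proof.

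The main obstacle is the interaction between the two halves of the estimate: Egorov's theorem delivers uniform convergence only off a set whose measure we can force to be small but not zero, and it is precisely uniform integrability that guarantees the integrals over this small exceptional set remain negligible \emph{uniformly in} $n$. Without uniform integrability the mass of $f_n$ could escape onto the exceptional set and the conclusion would fail, so this coupling is the essential point, whereas the Fatou bound and the uniform-convergence estimate are routine.
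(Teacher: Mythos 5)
Your proof is correct, but there is nothing in the paper to compare it against: the paper states this lemma (together with Lemma \ref{weakstrong}) as a well-known result from measure theory and uses it as a black box, offering no proof. Your argument is the standard textbook proof of Vitali's theorem on a finite measure space, and every step checks out: the finite-partition argument legitimately upgrades the absolute-continuity form of uniform integrability to the bound $\sup_n\int_\Omega|f_n|\,d\mu<\infty$ precisely because $\mu(\Omega)<\infty$; Fatou's lemma then gives both $f\in L^1(\Omega)$ and the inherited smallness estimate $\int_E|f|\,d\mu\le\varepsilon$ on sets of measure below $\delta$; and Egorov's theorem applies because the measure is finite and $f$ is a.e.\ finite, which is exactly the hypothesis the lemma provides. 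One minor remark: if one instead takes the tail definition of uniform integrability ($\sup_n\int_{\{|f_n|>M\}}|f_n|\,d\mu\to 0$ as $M\to\infty$), the absolute-continuity property you assume follows from it on a finite measure space via $\int_E|f_n|\,d\mu\le M\mu(E)+\sup_n\int_{\{|f_n|>M\}}|f_n|\,d\mu$, so your proof covers that formulation as well.
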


\begin{lem}\label{weakstrong}
Let $\Omega\subset\r^d$ and $f_n,f,g_n,g:\Omega\to\r$ be measurable functions.
If $f_n\to f$ a.e. in $\Omega$, $f_n$ is uniformly bounded in $L^\infty(\Omega)$ and $g_n\rightharpoonup g$ in $L^1(\Omega)$, then
\begin{eqnarray*}
f_ng_n\rightharpoonup fg\ {\rm in\ } L^1(\Omega).
\end{eqnarray*}
\end{lem}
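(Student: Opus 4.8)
The plan is to unpack the definition of weak convergence in $L^1$ and reduce everything to testing against a fixed but arbitrary $\phi\in L^\infty(\Omega)$: since $(L^1)^*=L^\infty$, the claim $f_ng_n\rightharpoonup fg$ is equivalent to $\int_\Omega f_ng_n\phi\,d\mu\to\int_\Omega fg\phi\,d\mu$ for every such $\phi$. First I would record two preliminary facts. Because $f_n\to f$ a.e.\ and $\sup_n\|f_n\|_{L^\infty}=:M<\infty$, the limit inherits the bound $\|f\|_{L^\infty}\le M$, so $f\phi\in L^\infty(\Omega)$ and the products $f_ng_n,fg\in L^1(\Omega)$ are well defined (e.g.\ $|f_ng_n|\le M|g_n|$). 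Moreover a weakly convergent sequence in $L^1$ is norm-bounded, say $\sup_n\|g_n\|_{L^1}=:G<\infty$, and, by the Dunford--Pettis theorem, uniformly integrable.

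The main decomposition I would use is
\[
\int_\Omega f_ng_n\phi\,d\mu-\int_\Omega fg\phi\,d\mu=\int_\Omega (f_n-f)\phi\,g_n\,d\mu+\int_\Omega (f\phi)(g_n-g)\,d\mu.
\]
The second term is harmless: $f\phi\in L^\infty(\Omega)$ is a fixed test function, so $\int_\Omega(f\phi)(g_n-g)\,d\mu\to0$ directly from $g_n\rightharpoonup g$ in $L^1$. Everything therefore comes down to the first term, which is where the real work lies.

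To handle $\int_\Omega(f_n-f)\phi\,g_n\,d\mu$, I would set $h_n:=(f_n-f)\phi$, so that $h_n\to0$ a.e.\ and $|h_n|\le 2M\|\phi\|_{L^\infty}=:C$, and then exploit uniform integrability against Egorov's theorem. Fix $\varepsilon>0$; uniform integrability yields $\delta>0$ with $\int_E|g_n|\,d\mu<\varepsilon$ for all $n$ whenever $\mu(E)<\delta$, and Egorov's theorem (using $\mu(\Omega)<\infty$) produces a set $E$ with $\mu(E)<\delta$ on whose complement $h_n\to0$ uniformly. Splitting the integral over $E$ and $\Omega\setminus E$ gives
\[
\Bigl|\int_\Omega h_ng_n\,d\mu\Bigr|\le \Bigl(\sup_{\Omega\setminus E}|h_n|\Bigr)G+C\varepsilon,
\]
and letting $n\to\infty$ kills the first summand, leaving $\limsup_n\bigl|\int_\Omega h_ng_n\,d\mu\bigr|\le C\varepsilon$. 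As $\varepsilon$ was arbitrary the first term vanishes, completing the proof.

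I expect the only genuine obstacle to be this last step: since $g_n$ converges merely weakly, one cannot pass to the limit in the product by dominated convergence, and the standard remedy is precisely the pairing of Dunford--Pettis uniform integrability with Egorov's theorem. A minor point to flag is that Egorov requires $\mu(\Omega)<\infty$; this is consistent with the setting of Lemma \ref{vit} and with the intended application to a bounded time interval, but if $\Omega$ had infinite measure one would additionally need tightness of $\{g_n\}$ at infinity.
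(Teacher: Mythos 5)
The paper never proves this lemma: it appears in the preliminaries as one of two results described as ``well known results from measure theory'' and is invoked as a black box in the proof of Proposition \ref{spelnia}, so there is no in-paper argument to compare against and your proposal has to stand on its own --- which it does. The reduction to testing against $\phi\in L^\infty(\Omega)$, the decomposition
\[
\int_\Omega f_ng_n\phi\,d\mu-\int_\Omega fg\phi\,d\mu
=\int_\Omega (f_n-f)\phi\,g_n\,d\mu+\int_\Omega (f\phi)(g_n-g)\,d\mu ,
\]
and the treatment of both terms are correct: the second term vanishes because $f\phi\in L^\infty(\Omega)$ is a legitimate test function for the weak convergence $g_n\rightharpoonup g$, and the first is handled by the standard pairing of Dunford--Pettis (a weakly convergent sequence in $L^1$ is norm bounded and uniformly integrable) with Egorov. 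A stylistic alternative, equivalent in substance, is to replace Egorov by convergence in measure: on a finite measure space $h_n:=(f_n-f)\phi\to 0$ in measure, and splitting the integral over $\{|h_n|>\eta\}$ and its complement yields the same bound $\eta G+C\varepsilon$. Your caveat that Egorov requires $\mu(\Omega)<\infty$ is correctly flagged and is harmless in context: the paper applies the lemma only with $\Omega=[0,t]$ a bounded time interval, consistent with the explicit boundedness hypothesis in Lemma \ref{vit}; for $\Omega$ of infinite measure the statement remains true, but one must additionally use the tightness of $\{g_n\}$, which Dunford--Pettis likewise supplies for weakly convergent sequences in $L^1$.
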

\noindent
Lastly we will say that particles $x_i$ and $x_j$ {\it collide} at the time $t$ if and only if $x_i(t)=x_j(t)$ but $x_i\neq x_j$ in some left-sided neighborhood of $t$ and we will say that they {\it stick} at the time $t$ if and only if they collide and $v_i(t)=v_j(t)$.

\section{Main result}
\subsection{Outline}\label{outline}

In this section we prove existence for the discrete C-S model (\ref{cs1}) and (\ref{cs2}) with a singular communication weight given by (\ref{psi}). Our strategy is based on the observation that the function $t\mapsto \psi(|x_i(t)-x_j(t)|)$ is Lipschitz continuous in a neighborhood of each time $t_0$, such that for all $i,j$, we have $x_i(t_0)\neq x_j(t_0)$, which makes local existence in such points trivial. The idea is that if we can prove that the particles collide in some sense rarely, then the only difficulty will be to establish existence in a neighborhood of each point of collision of some particles. Technicaly we will obtain existence of solutions by approximating them with solutions of C-S model with bounded weights.  

\subsection{Approximate solutions}\label{ap}
In this section we define the approximate solutions and present some of their most important properties. For each $n$ let
\begin{eqnarray*}
\psi_n(s)=\left\{
\begin{array}{ccc}
\psi(s) & {\rm if}& s\geq (n-1)^{-\frac{1}{\alpha}},\\
smooth\ and\ monotone & {\rm if}& n^{-\frac{1}{\alpha}}\leq s\leq (n-1)^{-\frac{1}{\alpha}},\\
n & {\rm if}& s\leq n^{-\frac{1}{\alpha}}
\end{array}
\right.
\end{eqnarray*}
for all $s\in [0,\infty)$ with $\psi$ given by (\ref{psi}). For all $n$, functions $\psi_n$ are smooth and bounded, thus C-S systems associated with these weights have unique solutions. This can be expressed by the following proposition.
\begin{prop}\label{psin}
For each positive integer $n$ and for arbitrary initial data, the system
\begin{eqnarray}\label{n}
\left\{
\begin{array}{lll}
\dot{x}_i^n=&v_i^n,\\
\dot{v}_i^n=&\frac{1}{N}\sum_{k=1}^N(v_k^n-v_i^n)\psi_n(|x_i^n-x_k^n|)
\end{array}
\right.
\end{eqnarray}
has a unique global classical solution $x^n$ belonging to the class $(C^2([0,T]))^{Nd}$. Moreover, this solution is stable with respect to the initial data i.e. for all $x^{n,1}(0),x^{n,2}(0),v^{n,1}(0),v^{n,2}(0)$, we have
\begin{eqnarray*}
\|x^{n,1}-x^{n,2}\|_{W^{1,\infty}([0,T])}< L(n)\left(|x^{n,1}(0)-x^{n,2}(0)|+|v^{n,1}(0)-v^{n,2}(0)|\right),
\end{eqnarray*}
where $L(n)$ is a constant depending on $n$ (and also on $T$, $d$, $\alpha$ and $N$).
\end{prop}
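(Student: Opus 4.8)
The plan is to establish existence, uniqueness, regularity, and stability of the solutions of the smoothed system \eqref{n} by appealing to the standard theory of ODEs, since for each fixed $n$ the weight $\psi_n$ is smooth and bounded. The key point enabling this is that the right-hand side of \eqref{n}, viewed as a vector field on $\r^{2Nd}$, is globally Lipschitz continuous. Writing the system as $\dot{z}=F_n(z)$ with $z=(x^n,v^n)$, the first block $\dot{x}_i^n=v_i^n$ is clearly linear, and the second block involves the products $(v_k^n-v_i^n)\psi_n(|x_i^n-x_k^n|)$. Because $\psi_n$ is bounded by $n$ and has bounded derivative (being smooth with $\psi_n\equiv n$ near $0$ and $\psi_n(s)=s^{-\alpha}$ for $s\geq (n-1)^{-1/\alpha}$, where its derivative is likewise bounded), the composition $s\mapsto\psi_n(|s|)$ is globally Lipschitz on $\r^d$, and multiplying by the factor $(v_k^n-v_i^n)$, which grows linearly, yields a map that is locally Lipschitz with at most linear growth.

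First I would verify global Lipschitz continuity of $F_n$ on bounded sets together with the a priori bound needed to exclude blow-up. For local existence and uniqueness on a short time interval, the Picard--Lindel\"of theorem applies immediately once $F_n$ is shown locally Lipschitz. To upgrade this to a global-in-time solution on $[0,T]$, I would derive an a priori bound precluding finite-time blow-up: the average velocity $\bar v$ is conserved (as computed in the introduction, this holds for any nonnegative weight), and one checks that $\frac{d}{dt}\sum_i|v_i^n|^2\leq 0$ by the same symmetrization trick, so the velocities remain bounded by their initial values. Consequently $|v^n(t)|$ is bounded uniformly on $[0,T]$, and since $\dot{x}^n=v^n$, the positions grow at most linearly, so $z(t)$ stays in a fixed compact set on $[0,T]$; the standard continuation argument then yields a unique global solution. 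The $C^2$ regularity follows by bootstrapping: $v^n$ is $C^1$ because its derivative equals the right-hand side of the second equation, which is a composition and product of $C^1$ functions of the $C^1$ quantities $x^n,v^n$; hence $x^n\in(C^2([0,T]))^{Nd}$.

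For the stability estimate, I would subtract the two systems corresponding to initial data $(x^{n,1}(0),v^{n,1}(0))$ and $(x^{n,2}(0),v^{n,2}(0))$, set $\delta(t):=|x^{n,1}(t)-x^{n,2}(t)|+|v^{n,1}(t)-v^{n,2}(t)|$, and use the global Lipschitz bound on $F_n$ (with constant depending on $n$, $T$, $d$, $\alpha$, $N$, through the uniform velocity bound) to obtain a differential inequality $\frac{d}{dt}\delta(t)\leq C(n)\,\delta(t)$. Gr\"onwall's lemma then gives $\delta(t)\leq e^{C(n)t}\delta(0)$, and taking the supremum over $[0,T]$ together with the relation $\dot{x}^n=v^n$ to control the $W^{1,\infty}$ norm produces the claimed estimate with $L(n):=e^{C(n)T}$ (adjusted by a constant to pass from the $\sup$ of $\delta$ to the full $W^{1,\infty}$ norm).

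I expect no deep obstacle here, since for each fixed $n$ the weight $\psi_n$ is genuinely smooth and bounded and the whole statement is a routine application of classical ODE theory; the only mild technical care is in verifying that $s\mapsto\psi_n(|s|)$ is Lipschitz across the gluing region $n^{-1/\alpha}\leq s\leq (n-1)^{-1/\alpha}$ and at $s=0$, which is guaranteed by the stipulated smoothness and monotonicity of $\psi_n$ there. The essential difficulties of the paper lie entirely in the passage $n\to\infty$, where the Lipschitz constant $L(n)$ degenerates; this proposition merely supplies the uniformly-in-$t$ (but not uniformly-in-$n$) well-posed approximants that will later be shown to converge to a piecewise weak solution of the singular system.
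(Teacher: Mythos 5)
Your proposal is correct and coincides with what the paper intends: the paper explicitly omits the proof as ``standard'', and your argument --- local Lipschitz continuity of the smoothed vector field, conservation of the average velocity plus the symmetrization identity giving an a priori velocity bound to rule out blow-up, bootstrap for $C^2$ regularity, and Gr\"onwall for the stability estimate --- is exactly that standard ODE argument. One small slip: your opening claim that the right-hand side is \emph{globally} Lipschitz on $\r^{2Nd}$ is not literally true (the product $(v_k^n-v_i^n)\psi_n(|x_i^n-x_k^n|)$ has a Lipschitz constant in $x$ growing with $|v|$), but you correct this yourself to ``locally Lipschitz with at most linear growth'', and the continuation argument via the a priori bounds is precisely the right repair.
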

\noindent
The proof of this proposition is standard and we omit it.
\begin{rem}\rm\label{stab}
Stability with respect to the initial data is uniform in a neighborhood of each time in which no collision occurs. Indeed, if in some interval $[0,t]$ we have
\begin{eqnarray*}
\inf_{i,j}|x_i^n-x_j^n|>\delta
\end{eqnarray*}
for all $n$, then C-S systems associated with $\psi_n$ are exactly the same for all $n>\delta^{-\alpha}+1$. It follows from the fact that for $n>\delta^{-\alpha}+1$ and $s\geq \delta$ all functions $\psi_n$ coincide. It implies that if we consider functions $x^n$ restricted to $[0,t]$, then Lipschitz continuity with respect to the initial data mentioned in Proposition \ref{psin} holds with an $n$ independent constant $L(\lfloor\delta^{-\alpha}\rfloor+2)$ which corresponds to the C-S system associated with the smallest weight for which $\psi_n$ coincide for $s\geq \delta$.
\end{rem}
\noindent
Next, let us state some properties of solutions of the C-S model associated with $\psi_n$.
\begin{prop}\label{properties}
Let $x^n$ be a solution of the C-S model associated with weight $\psi_n$. Then $x^n$ has the following properties:
\begin{enumerate}
\item It belongs to the class $C^\infty$ in a neighborhood of every such point $t$, that $|x^n_i(t)-x^n_j(t)|>0$ for all $i,j=1,...,N$.
\item The avarage velocity of the particles is constant:
\begin{eqnarray*}
\frac{1}{N}\sum_{i=1}^Nv_i(t)=const.
\end{eqnarray*}
\item Velocity $v^n$ is bounded: there exists a constant $M(n)$ such that for all $i=1,...,N$, we have
\begin{eqnarray*}
\|v^n_i\|_{L_\infty([0,T])}\leq M(n).
\end{eqnarray*}
\item If the initial data $x^n(0), v^n(0)$ are uniformly bounded, then also $v^n$ is uniformly bounded: there exists a constant $M$ such that for all $i=1,...,N$ and all $n=1,2,...$, we have
\begin{eqnarray*}
\|v^n_i\|_{L_\infty([0,T])}\leq M.
\end{eqnarray*}
\item Acceleration $\dot{v}_i^n$ is bounded by $2M(n)n$.
\item If at some point $t$ we have $x_i^n(t)=x_j^n(t)$ and $v_i^n(t)=v^n_j(t)$ for any $i,j=1,...,N$, then $x^n_i\equiv x^n_j$ on $[t,T]$.
\item If at some point $t$ we have $v^n_i(t)=v^n_j(t)$ for all $i,j=1,...,N$, then $v^n$ is constant on $[t,T]$. 
\end{enumerate}
\end{prop}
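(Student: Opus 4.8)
The plan is to prove the seven properties in a natural order, exploiting the fact that for each fixed $n$ the weight $\psi_n$ is smooth, bounded, and nonnegative, so we are essentially in the classical Cucker-Smale setting. Property 1 follows immediately from Proposition \ref{psin}: away from any collision the right-hand side of \eqref{n} is a smooth (indeed $C^\infty$) function of $(x^n,v^n)$, so by standard ODE regularity the solution is $C^\infty$ there. Property 2 is precisely the computation already carried out in the introduction, using the antisymmetry obtained by swapping the summation indices $i$ and $k$; since this relies only on nonnegativity of the weight, it transfers verbatim. I would simply record that $\frac{d}{dt}\sum_i v_i^n=0$ and hence $\bar v$ is constant.

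For Property 3 I would use the standard observation that the diameter of the velocity set cannot increase. Concretely, let $i(t)$ be an index realizing $\max_i |v_i^n(t)|$ (or work with each coordinate, or with the quantity $\max_i|v_i^n|^2$); differentiating and using that $\dot v_i^n$ is a convex-combination-type average of the $v_k^n-v_i^n$ with nonnegative coefficients $\tfrac1N\psi_n$, one sees that the maximal speed is nonincreasing, giving a bound $M(n)$ in terms of the initial data. Property 4 then follows from the same argument: the bound produced is in fact independent of $n$, since it depends only on the initial velocities (the nonnegativity and the convex-combination structure are uniform in $n$). This is where the structural remark from the introduction pays off — the maximum-principle bound on velocities is weight-independent. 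Property 5 is then immediate: from \eqref{cs2} we estimate $|\dot v_i^n|\le \frac1N\sum_k \psi_n(|x_i^n-x_k^n|)\,|v_k^n-v_i^n|\le \|\psi_n\|_\infty\cdot 2M(n)=2M(n)n$, using $\|\psi_n\|_\infty=n$ and $|v_k^n-v_i^n|\le 2M(n)$.

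The more delicate items are Properties 6 and 7, which assert that coincidence of a pair (resp. of all velocities) at one time persists for all later times. I expect Property 6 to be the main obstacle, since it is a statement about forward uniqueness of a degenerate configuration rather than a conservation law. The plan is to argue by uniqueness: if $x_i^n(t)=x_j^n(t)$ and $v_i^n(t)=v_j^n(t)$, I would show that the function $t\mapsto x_i^n(t)-x_j^n(t)$ and its derivative both vanish at $t$ and that the ``stuck'' configuration $x_i^n\equiv x_j^n$ on $[t,T]$ is a solution of the reduced system obtained by identifying the two particles; the stability estimate of Proposition \ref{psin} (Lipschitz dependence on initial data, hence uniqueness) then forces the genuine solution to coincide with it. One must check that merging the two indices yields a consistent system driven by the same smooth $\psi_n$, so that the uniqueness theorem applies; the nonnegativity of $\psi_n$ and the symmetric form of \eqref{n} make the difference $v_i^n-v_j^n$ satisfy a linear homogeneous ODE with the trivial solution, which is where uniqueness bites. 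Property 7 is the special case where the ``cluster'' is the whole system: if all velocities agree at time $t$, then by Property 2 their common value equals $\bar v$, and the interaction terms on the right of \eqref{n} all vanish at $t$; applying the same uniqueness/Gronwall argument to $\max_{i,j}|v_i^n-v_j^n|$ shows this difference stays zero, so $v^n\equiv\bar v$ is constant on $[t,T]$.
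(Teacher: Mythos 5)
Your proof is correct, and on the quantitative core of the proposition --- Properties 3 and 4 --- it takes a genuinely different route from the paper. The paper introduces the velocity fluctuation $r_n(t)=\sum_{i,j}(v_i^n-v_j^n)^2$, shows by the same index-swapping symmetrization used for Property 2 that $r_n'=-2\sum_{i,j}(v_i^n-v_j^n)^2\psi_n(|x_i^n-x_j^n|)\le 0$, and then combines $r_n(t)\le r_n(0)$ with conservation of the average velocity to produce $M(n)$; Property 4 follows because $r_n(0)$ and $\bar v^n$ depend only on the initial data. You instead run a maximum principle on $\max_i|v_i^n|$: since $\dot v_i^n$ is a nonnegative combination of the differences $v_k^n-v_i^n$, the maximal speed cannot increase, giving $M(n)=\max_i|v_i^n(0)|$, which is automatically $n$-independent under the hypothesis of Property 4. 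Both arguments are purely structural (they use only nonnegativity of the weight), and yours is arguably more direct; the one point you should make rigorous is the non-differentiability of the max, handled by Dini derivatives or by noting that the max of finitely many $C^1$ functions is Lipschitz with derivative a.e. equal to that of an achieving function. What the paper's choice buys is the explicit dissipation identity for $r_n$, which is not incidental: the same computation is reused later for property 7 of Corollary \ref{properties2}, in Proposition \ref{podob}, and in the clustering analysis of Section \ref{clus}, so the fluctuation functional is the backbone of the rest of the paper, not just a device for Property 3. For Properties 6 and 7 the paper contents itself with ``obvious consequences of uniqueness''; your merged-system construction together with the stability estimate of Proposition \ref{psin} is precisely the argument being alluded to (the consistency check succeeds because the $i$--$j$ interaction term carries the factor $v_j^n-v_i^n=0$, so finiteness of $\psi_n(0)=n$ is all that is needed; a permutation-symmetry argument, swapping the labels $i$ and $j$ and invoking uniqueness, gives the same conclusion even faster), and your Gronwall remark on the pair $(x_i^n-x_j^n,\,v_i^n-v_j^n)$ is a valid alternative since $\psi_n$ is Lipschitz. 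Properties 1, 2, and 5 are proved exactly as in the paper.
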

\begin{proof}
\begin{enumerate}
\item  Since $x^n$ is continuous, if at some point $t$ all the particles have different positions i.e. $|x^n_i(t)-x^n_j(t)|>0$ for all $i,j=1,...,N$ then it is also true in some neighborhood of $t$. Moreover in this neighborhood of $t$ the right-hand side of $(\ref{n})_2$ is differentiable, which by iteration implies that $x^n$ is smooth at $t$.
\item This part was already done in section \ref{int}.
\item Let $r_n(t):=\sum_{i,j=1}^N(v_i^n(t)-v_j^n(t))^2$. By $(\ref{n})_2$, we have
\begin{eqnarray*}
r_n^{'}=2\sum_{i,j=1}^N(v_i^n-v_j^n)\left(\frac{1}{N}\sum_{k=1}^N(v_k^n-v_i^n)\psi_n(|x_i^n-x_k^n|)- \frac{1}{N}\sum_{k=1}^N(v_k^n-v_j^n)\psi_n(|x_j^n-x_k^n|)\right)=\\
= \frac{2}{N}\sum_{i,j,k=1}^N(v_i^n-v_j^n)(v_k^n-v_i^n)\psi_n(|x_i^n-x_k^n|)- \frac{2}{N}\sum_{i,j,k=1}^N(v_i^n-v_j^n)(v_k^n-v_j^n)\psi_n(|x_j^n-x_k^n|).
\end{eqnarray*}
Again, we substitute $i$ and $k$ in the first summant and $j$ and $k$ in the second summant to obtain
\begin{eqnarray*}
r_n^{'}=\frac{1}{N}\sum_{i,j,k=1}^N(v_i^n-v_j^n)(v_k^n-v_i^n)\psi_n(|x_i^n-x_k^n|) +\frac{1}{N}\sum_{i,j,k=1}^N(v_k^n-v_j^n)(v_i^n-v_k^n)\psi_n(|x_i^n-x_k^n|)\\
-\frac{1}{N}\sum_{i,j,k=1}^N(v_i^n-v_j^n)(v_k^n-v_j^n)\psi_n(|x_j^n-x_k^n|) -\frac{1}{N}\sum_{i,j,k=1}^N(v_i^n-v_k^n)(v_j^n-v_k^n)\psi_n(|x_j^n-x_k^n|)=\\
-\frac{1}{N}\sum_{i,j,k=1}^N(v_i^n-v_k^n)^2\psi_n(|x_i^n-x_k^n|) -\frac{1}{N}\sum_{i,j,k=1}^N(v_j^n-v_k^n)^2\psi_n(|x_j^n-x_k^n|)=\\
=-2\sum_{i,j=1}^N(v_i^n-v_j^n)^2\psi_n(|x_i^n-x_j^n|)\leq 0.
\end{eqnarray*}
Thus for each $n$, function $r_n$ is nonincreasing with it's maximum at $0$ i.e. $r_n(t)\leq r_n(0)$. Now let $\bar{v}^n$ be the avarage velocity, which as we know from property 2 is a constant. We have
{\small
\begin{eqnarray*}
\sum_{i=1}^N(\bar{v}^n-v^n_i)^2=\sum_{i=1}^N\left(\frac{1}{N}\sum_{j=1}^Nv_j^n-v_i^n\right)^2=\frac{1}{N^2} \sum_{i=1}^N\left(\sum_{j=1}^N(v_j^n-v_i^n)\right)^2
\leq \frac{1}{N}\sum_{i,j=1}^N(v_j^n-v_i^n)^2=\frac{1}{N}r_n(0).
\end{eqnarray*}
}
Lastly we have
\begin{eqnarray*}
|v^n_i|\leq |v^n_i-\bar{v}^n|+|\bar{v}^n|
\leq C(N)\sqrt{r_n(0)}+|\bar{v}^n|\leq C(N)\sqrt{r_n(0)}=:M(n),
\end{eqnarray*}
where $C(N)$ is a generic constant depending on $N$.
\item We simply note that if initial velocity is uniformly bounded, then $M(n)\leq M$ for some $M$ independent of $n$.
\end{enumerate}
\noindent
Point 5 follows immediately from property 3 and equation $(\ref{n})_2$, while points 6 and 7 are obvious consequences of uniqueness of the solutions.
\end{proof}
\begin{rem}\rm\label{setb}
Property 6 from the above proposition implies that the acceleration equation $(\ref{n})_2$ can be replaced by:
\begin{eqnarray}\label{cs3}
\dot{v}^n_i=\frac{1}{N}\sum_{k\in B_i(t)}(v^n_k-v^n_i)\psi_n(|x_k^n-x^n_i|),
\end{eqnarray}
where $B_i(t)$ is defined by (\ref{bi}),
with
\begin{eqnarray*}
\dot{v}^n_i=0
\end{eqnarray*}
should set $B_i(t)$ be empty. This technical observation will be useful later on.
\end{rem}
\noindent
Hereinafter we will use $M(n)$ and $M$ in the same roles as in Proposition \ref{properties}.
We end this section with an important lemma that is in fact our way to deal with existence in a right sided neighborhood of a point of collision.

\begin{lem}\label{equi}
Let $x^n$ be a solution of C-S system on the time interval $[0,T]$ with weight $\psi_n$ and initial data $x(0),v(0)$ -- independent of $n$. Then there exists an interval $[0,t]$, such that all velocities $v^n$ are uniformly H\" older continuous on $[0,t]$.
\end{lem}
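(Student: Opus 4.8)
The plan is to bound the oscillation of each $v_i^n$ directly from the integral form of the velocity equation and to reduce everything to a single scalar estimate. Since the initial data are independent of $n$, Proposition \ref{properties} provides the uniform bound $\|v_i^n\|_{L^\infty}\le M$, so $|v_k^n-v_i^n|\le 2M$ for every pair. Writing $d_{ik}(\sigma):=|x_i^n(\sigma)-x_k^n(\sigma)|$ and integrating (\ref{cs3}) (which by Remark \ref{setb} already discards the stuck pairs), for $0\le s\le\tau$ we get
\[
|v_i^n(\tau)-v_i^n(s)|\le\frac1N\sum_{k\in B_i}\int_s^\tau\psi_n(d_{ik})\,|v_k^n-v_i^n|\,d\sigma .
\]
The dissipation identity behind Proposition \ref{properties} (the computation for $r_n$) integrates to the $n$-uniform bound $\int_0^T\sum_{i,k}|v_i^n-v_k^n|^2\psi_n(d_{ik})\,d\sigma\le\frac12 r_n(0)\le C$, so by Cauchy--Schwarz the whole problem reduces to proving a bound of the form
\[
\int_s^\tau\psi_n(d_{ik})\,d\sigma\le C\,|\tau-s|^{\beta},\qquad\beta>0,
\]
with $C,\beta$ independent of $n$; indeed this yields $|v_i^n(\tau)-v_i^n(s)|\le C'|\tau-s|^{\beta/2}$.

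For the scalar estimate I would use two $n$-uniform facts. First, letting $\Psi_n(r):=\int_0^r\psi_n$, the cut-off gives $\psi_n\le\psi$, hence (as $\alpha<1$) $\Psi_n(r)\le\Psi(r)=\frac1{1-\alpha}r^{1-\alpha}$ and, by subadditivity of $r\mapsto r^{1-\alpha}$, the uniform Hölder bound $|\Psi_n(a)-\Psi_n(b)|\le\frac1{1-\alpha}|a-b|^{1-\alpha}$; since $d_{ik}$ is $2M$-Lipschitz in time, $\sigma\mapsto\Psi_n(d_{ik})$ is uniformly $(1-\alpha)$-Hölder along trajectories. Second, on any time interval on which $d_{ik}$ is monotone with $|\dot d_{ik}|\ge c_0$ one has $\int\psi_n(d_{ik})\,d\sigma\le c_0^{-1}\int\psi_n(d_{ik})|\dot d_{ik}|\,d\sigma=c_0^{-1}\mathrm{Var}\,\Psi_n(d_{ik})$, which is controlled by the above. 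Splitting $[s,\tau]$ at the level sets $\{d_{ik}\ge\delta\}$ and $\{d_{ik}<\delta\}$ and optimising in $\delta$ then gives the power $\beta=1-\alpha$, provided the time-set $\{d_{ik}<\delta\}$ is a single short interval and the approach there is transversal.

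That proviso is exactly what the structure of the model delivers when $N=2$: there $\ddot d_{ik}+\psi_n(d_{ik})\dot d_{ik}=|v^{\mathrm{tang}}_{ik}|^2/d_{ik}\ge0$, so $\dot d_{ik}+\Psi_n(d_{ik})$ is nondecreasing, and since $\ddot d_{ik}\ge0$ at every critical point the distance has no interior maximum---it is quasi-convex with a single minimum. Hence $d_{ik}$ cannot oscillate near the origin, its only small values occur in one interval about the minimum, and there the minimum is either positive (so $\psi_n(d_{ik})$ is bounded) or a genuine collision; a collision with $v_i^n\ne v_k^n$ is transversal ($c_0=|v_i^n-v_k^n|>0$), so $\int\psi_n(d_{ik})$ converges uniformly because $\alpha<1$, while a collision with $v_i^n=v_k^n$ is sticking and has already been removed by Remark \ref{setb}. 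In the complementary easy case, where all pairwise distances stay bounded below on $[0,t]$, the conclusion is immediate from Remark \ref{stab}, the systems coinciding for large $n$.

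The main obstacle is the passage to $N\ge3$: differentiating $d_{ik}$ twice now produces cross terms coming from the other particles, bounded only by $\psi_n$ of the remaining pairwise distances, so the monotonicity of $\dot d_{ik}+\Psi_n(d_{ik})$ and hence the quasi-convexity of $d_{ik}$ break down, and the estimates for the different pairs become coupled. I expect to handle this by restricting to a sufficiently small interval $[0,t]$, on which the accumulated cross terms are a lower-order perturbation: each $d_{ik}$ is then approximately quasi-convex, its near-origin oscillations are too few to spoil the variation bound, and a Grönwall argument closes the coupled system of estimates for $\sum_{i,k}\int_0^t\psi_n(d_{ik})\,d\sigma$. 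Making the smallness of $t$ and the transversality constants uniform in $n$ across all colliding pairs is the delicate point.
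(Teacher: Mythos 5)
Your reduction is sound as far as it goes: the dissipation identity for $r_n$ does integrate to an $n$-uniform bound on $\int_0^T\sum_{i,k}|v_i^n-v_k^n|^2\psi_n(d_{ik})\,d\sigma$, and Cauchy--Schwarz then correctly reduces the lemma to the scalar estimate $\int_s^\tau\psi_n(d_{ik})\,d\sigma\le C|\tau-s|^{\beta}$ with $C,\beta$ independent of $n$. This is a genuinely different entry point from the paper's, which never invokes the dissipation integral: there, $|v_k^n-v_i^n|$ is dominated by four times its radial component (Lemma \ref{equitech}), so that the integrand becomes, up to sign, the exact derivative $\frac{d}{d\theta}\Psi(|x_k^n-x_i^n|)$, and the bound $I\le C|s|^{1-\alpha}$ follows from the $n$-uniform Lipschitz bound on $x_k^n-x_i^n$.

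But the scalar estimate is exactly where your argument stops being a proof. Your verification covers only $N=2$ (quasi-convexity of $d_{ik}$), whereas the lemma is needed for general $N$ with several particles occupying the same initial position; for $N\ge3$ you offer a plan, and the plan contains a circularity you cannot close as stated. The two ingredients you need --- that near $t=0$ the set $\{d_{ik}<\delta\}$ is a single interval on which the approach is transversal, with a transversality constant bounded below \emph{uniformly in $n$} --- are precisely the content of the paper's Lemma \ref{equitech}, and they do not come for free from ``restricting to a small interval'': the only a priori bound on the accelerations is $|\dot v_i^n|\le 2M(n)n$ (property 5 of Proposition \ref{properties}), so the length of the interval on which your cross terms are a ``lower-order perturbation'' shrinks like $1/n$. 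The paper breaks this circularity with a two-step bootstrap: Step 1 proves the H\"older bound with an $n$-independent constant $L$ on the (possibly $n$-dependent) intervals $[0,t_n]$ furnished by Lemma \ref{equitech}; Step 2 feeds that bound back into the remainders, $|r_n(s)|\le 2L|s|^{1-\alpha}$ and $|o_n(s)|\le 2L|s|^{2-\alpha}$, to conclude that the defining conditions (\ref{war1}) persist up to the $n$-independent time $t_0=\left(\frac{1}{10L}|v_k^n(0)-v_i^n(0)|\right)^{\frac{1}{1-\alpha}}$, i.e. $t_n\ge t_0$. Without this bootstrap, or some substitute for it, your Gr\"onwall closure of the coupled pairwise estimates has no $n$-independent interval to live on; the ``delicate point'' you flag in your last sentence is not a technicality but the whole lemma.
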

\noindent
To prove this lemma we need yet another, technical lemma.
\begin{lem}\label{equitech}
If $x_i(0)=x_j(0)$, then for all $n$, there exists an interval $(0,t_n]$, such that
\begin{eqnarray*}
|v_i^n(s)-v_j^n(s)|\leq 4\frac{|(v_i^n(s)-v^n_j(s))\cdot(x^n_i(s)-x^n_j(s))|}{|x^n_i(s)-x^n_j(s)|}
\end{eqnarray*}
for $s\in(0,t_n]$.
\end{lem}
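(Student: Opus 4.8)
The plan is to reduce the claimed inequality to a statement about the angle between the relative velocity and the relative position, and then to exploit the fact that near a collision this angle is small. Introduce the shorthand $y(t):=x_i^n(t)-x_j^n(t)$ and $w(t):=v_i^n(t)-v_j^n(t)$, so that $\dot y=w$, $y(0)=0$, and the assertion to be proved reads $|w(t)|\,|y(t)|\le 4\,|w(t)\cdot y(t)|$; equivalently, writing $\theta(t)$ for the angle between $w(t)$ and $y(t)$, we must show $|\cos\theta(t)|\ge \tfrac14$ on a right-sided neighborhood of $0$. Intuitively this holds because, starting from $y(0)=0$, to leading order $y(t)\approx t\,w(0)$ while $w(t)\approx w(0)$, so both vectors point almost in the direction of $w(0)$ and are therefore nearly parallel.

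First I would dispose of the degenerate case $w(0)=0$. If $v_i^n(0)=v_j^n(0)$ together with $x_i^n(0)=x_j^n(0)$, then property 6 of Proposition \ref{properties} forces $x_i^n\equiv x_j^n$, so $y\equiv 0$ and there is nothing to prove; hence we may assume $c:=|w(0)|>0$.

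Next I would quantify the leading-order picture using the acceleration bound. By property 5 of Proposition \ref{properties} we have $|\dot v_i^n|,|\dot v_j^n|\le 2M(n)n$, hence $|\dot w|\le 4M(n)n=:A$. Integrating gives $|w(t)-w(0)|\le At$, and since $y(t)=\int_0^t w(s)\,ds=t\,w(0)+\int_0^t\big(w(s)-w(0)\big)\,ds$, also $|y(t)-t\,w(0)|\le \tfrac12 A t^2$. Using these two estimates I would bound $w(t)\cdot y(t)$ from below and $|w(t)|\,|y(t)|$ from above: both products are dominated by the term $t\,|w(0)|^2=tc^2$ coming from $w(0)\cdot(t\,w(0))$, while every error contribution carries an extra factor of $At$. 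Restricting to $t\le t_n$ with $t_n$ chosen so small that $At_n\le \varepsilon c$ for a fixed small absolute constant $\varepsilon$, all error terms have relative size of order $\varepsilon$, so $w(t)\cdot y(t)\ge (1-O(\varepsilon))tc^2>0$ and $|w(t)|\,|y(t)|\le (1+O(\varepsilon))tc^2$. Choosing $\varepsilon$ small enough that $(1+O(\varepsilon))\le 4(1-O(\varepsilon))$ — which holds with enormous room to spare because of the generous factor $4$ — yields $|w(t)|\,|y(t)|\le 4\,w(t)\cdot y(t)=4\,|w(t)\cdot y(t)|$ on $(0,t_n]$, as required.

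The only delicate point is the bookkeeping of the error terms, namely checking that the quadratic-in-$t$ deviation of $y$ from $t\,w(0)$ and the linear-in-$t$ deviation of $w$ from $w(0)$ are genuinely subordinate to the main term $tc^2$; this is exactly what the acceleration bound of property 5 secures, and it is the reason $t_n$ is allowed to depend on $n$ (through $A=4M(n)n$). I expect no real obstacle beyond making the constant $\varepsilon$ explicit, since the factor $4$ in the statement is far from sharp and is present precisely to absorb these estimates comfortably.
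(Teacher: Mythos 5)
Your proof is correct and follows essentially the same route as the paper's: both rest on the acceleration bound of property 5, the expansions $w(t)=w(0)+O(M(n)n\,t)$ and $y(t)=t\,w(0)+O(M(n)n\,t^{2})$, and a choice $t_n\sim |v_i^n(0)-v_j^n(0)|/(M(n)n)$ making the error terms subordinate to the main term $t|w(0)|^{2}$. The only differences are cosmetic: the paper compares the errors against the current value $|w(s)|$ (via its conditions (\ref{war1}), (\ref{war3})) rather than against $|w(0)|$, and you additionally dispose of the degenerate case $w(0)=0$ via property 6, which the paper leaves implicit.
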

\begin{proof}
By property 5 from Proposition \ref{properties}, we have
\begin{eqnarray}\label{rn}
v^n_i(s)-v^n_j(s)=v^n_i(0)-v^n_j(0)+r_n(s),
\end{eqnarray}
where $|r_n(s)|\leq 2|s|Mn$. Moreover as $x^n_i-x^n_j$ is a $C^2$ function, by Taylor's formula
\begin{eqnarray}\label{on}
x^n_i(s)-x^n_j(s)=s\cdot\left(v^n_i(0)-v^n_j(0)\right)+o_n(s)=s\left(v^n_i(s)-v^n_j(s)-r_n(s)\right)+o_n(s),
\end{eqnarray}
where 
\begin{eqnarray*}
o_n(s):=\int_0^s (\dot{v}^n_i-\dot{v}^n_j)(s-\theta)d\theta,\ \ \ \ \ \ |o_n(s)|\leq 2|s|^2Mn.
\end{eqnarray*}
Thus
{\small
\begin{eqnarray}
|(v_i^n(s)-v^n_j(s))(x^n_i(s)-x^n_j(s))|=|s(v_i^n(s)-v^n_j(s))^2-s(v_i^n(s)-v^n_j(s))r_n(s)+ (v_i^n(s)-v^n_j(s))o_n(s)|\geq\nonumber\\
\geq s(v_i^n(s)-v^n_j(s))^2-s|(v_i^n(s)-v^n_j(s))r_n(s)|-|(v_i^n(s)-v^n_j(s))o_n(s)|\geq \frac{s}{2}(v_i^n(s)-v^n_j(s))^2\label{licznik}
\end{eqnarray}
}
assuming that $s\in(0,t_n]$, where $t_n$ is the supremum of all times $s_n$, such that for all $s\in(t,s_n]$, we have
\begin{eqnarray}\label{war1}
|r_n(s)|\leq \frac{1}{4}|v_i^n(s)-v^n_j(s)|,\ \ \ \ \ \ |o_n(s)|\leq \frac{s}{4}|v_i^n(s)-v^n_j(s)|.
\end{eqnarray}
To check that $t_n>0$, we notice that for
\begin{eqnarray}\label{tn}
s_n:=\frac{|v^n_i(0)-v^n_j(0)|}{10M(n)n}
\end{eqnarray}
and $s\in [0,s_n]$, we have
\begin{eqnarray}\label{war2}
|r_n(s)|\leq \frac{1}{5}|v^n_i(0)-v^n_j(0)|,\ \ \ \ \ \ |o_n(s)|\leq \frac{s}{5}|v^n_i(0)-v^n_j(0)|,
\end{eqnarray}
which together with (\ref{rn}) implies that
\begin{eqnarray*}
\frac{4}{5}|v^n_i(0)-v^n_j(0)|\leq |v^n_i(s)-v^n_j(s)|,
\end{eqnarray*}
condition (\ref{war1}) is satisfied. Therefore by taking $s_n$ given by (\ref{tn}) we get (\ref{licznik}). Now by (\ref{rn}) and (\ref{on}) on $(0,s_n]$ we also have
\begin{eqnarray}\label{war3}
|x^n_i(s)-x^n_j(s)|\leq 2s|v^n_i(s)-v^n_j(s)|, 
\end{eqnarray}
which together with (\ref{licznik}) proves that there exists $s_n>0$ such that on $(0,s_n]$ the assertion holds. Now we define $t_n$ as the supremum of all such times $s_n$. This finishes the proof.
\end{proof}
\noindent
Next we can proceed with the proof of Lemma \ref{equi}.
\begin{proof}[Proof of Lemma \ref{equi}]
The proof will follow by 2 steps. In step 1 we prove that for each $n$ there exists an interval $[0, t_n]$ on which $v^n$ is H\"older continuous with a constant idependent of $n$, while in step 2 we establish a lower bound on $t_n$ that is independent of $n$.\\
{\sc Step 1.} 
It suffices to show the assertion separately for all particles, thus let us fix $i=1,...,N$ and consider $x_i$. By Remark \ref{setb} for all $s$, we have
\begin{eqnarray*}
|v^n_i(s)-v^n_i(0)|=\left|\int_0^s\dot{v}^n_i(\theta)d\theta\right|\leq 
\frac{1}{N}\sum_{k\in B_i(0)}\int_0^s|v^n_k-v^n_i|\psi_n(|x^n_k-x^n_i|)d\theta = \\
\frac{1}{N}\sum_{k\in B^0_i}\int_0^s|v^n_k-v^n_i|\psi_n(|x^n_k-x^n_i|)d\theta + \frac{1}{N}\sum_{k\in B^+_i}\int_0^s|v^n_k-v^n_i|\psi_n(|x^n_k-x^n_i|)d\theta =: I+II,
\end{eqnarray*}
where
\begin{eqnarray*}
B^0_i:=\{j\in B_i(0): |x_j(0)-x_i(0)|=0\},\ \ \ B^+_i:=\{j\in B_i(0): |x_j(0)-x_i(0)|>0\}
\end{eqnarray*}
and $B_i(0)$ is the defined by (\ref{bi}) set of all particles that have different trajectories than $x_i$. Thus $B^0_i$ consists of all particles that start from the same position as $x_i$ but with different velocities, while $B^+_i$ consists of all particles that start from a different position than $x_i$. We may assume that $B_i(0)$ is not empty as otherwise all $x^n$ are constantly equal $x^n(0)$ and the assertion holds. Thus at least one of sets $B^0_i$ or $B^+_i$ is nonempty. Now we estimate $I$ and $II$ separately starting with $I$. For $j\in B^0_i$, we have $|v^n_j(0)-v^n_i(0)|>0$ and by its continuity there exists $t_n$ such that $|x^n_j(s)-x^n_i(s)|>0$ and consequently $\psi_n(|x^n_j(s)-x^n_i(s)|)\leq \psi(|x^n_j(s)-x^n_i(s)|)$ in $(0,t_n]$. Together with Lemma \ref{equitech} it implies that
\begin{eqnarray*}
I\leq \frac{4}{N}\sum_{j\in B^0_i}\int_0^s\frac{|(v^n_j-v^n_i)\cdot(x^n_j-x^n_i)|}{|x^n_j-x^n_i|}\psi(|x^n_j-x^n_i|)d\theta.
\end{eqnarray*}
We claim that, since $\Psi(|x^n_i(0)-x^n_j(0)|)=0$ for all $j\in B^0_i$, then
\begin{eqnarray*}
\int_0^s\frac{|(v^n_j-v^n_i)\cdot(x^n_j-x^n_i)|}{|x^n_j-x^n_i|}\psi(|x^n_j-x^n_i|)d\theta= \Psi(|x^n_j(s)-x^n_i(s)|),
\end{eqnarray*}
where $\Psi(s)=\frac{1}{1-\alpha}s^{1-\alpha}$ is a primitive of $\psi$. Indeed, we have
\begin{eqnarray*}
\Psi(|x^n_j(s)-x^n_i(s)|)=\int_0^s\Psi(|x^n_j-x^n_i|)^{'}d\theta\leq \int_0^s\psi(|x^n_j-x^n_i|)\frac{|(x^n_j-x^n_i)(v^n_j-v^n_i)|}{|x^n_j-x^n_i|}d\theta
\end{eqnarray*}
and since $\psi\geq 0$ we can substitute the above inequality with an equality provided that on 
$(0,t_n]$ the function $\xi(s):=(x^n_j(s)-x^n_i(s))(v^n_j(s)-v^n_i(s))$ has a constant sign. To prove that $\xi$ has a constant sign it suffices to show that $|\xi|>0$ in $(0,t_n]$, which is an immediate consequence of Lemma \ref{equitech}. Thus we proved that
{\small
\begin{eqnarray*}
I\leq\frac{4}{N}\sum_{j\in B^0_i}\Psi(|x^n_j(s)-x^n_i(s)|)= \frac{4}{N(1-\alpha)}\sum_{j\in B^0_i}\left|(x^n_j(s)-x^n_i(s))\right|^{1-\alpha}\leq \frac{4M^{1-\alpha}}{N(1-\alpha)}\sum_{j\in B^0_i}|s|^{1-\alpha}\leq
\frac{4M^{1-\alpha}}{1-\alpha}|s|^{1-\alpha},
\end{eqnarray*} 
}
where we use inequality $|x^n_j(s)-x^n_i(s)|\leq M|s|$ that follows from property 4 from Proposition \ref{properties}. To estimate $II$ we first notice that since for all $j\in B^+_i$, we have $|x^n_j(0)-x^n_i(0)|>0$ then there exists $\delta>0$ such that $|x^n_j(0)-x^n_i(0)|>\delta$ for all $j\in B^+_i$. Then, by property 4 from Proposition \ref{properties} there exists an $n$ independent interval $[0,t_0]$ on which $|x^n_j-x^n_i|>\delta$ for all $j\in B^+_i$. On this interval
\begin{eqnarray*}
\psi_n(|x^n_j(s)-x^n_i(s)|)\leq \delta^{-\alpha}.
\end{eqnarray*}
Therefore
\begin{eqnarray*}
II\leq \frac{1}{N}\sum_{j\in B^+_i} 2|s| M\delta^{-\alpha}\leq 2 t_0^\alpha M\delta^{-\alpha}|s|^{1-\alpha}
\end{eqnarray*}
and adding our estimations of $I$ and $II$ we get
\begin{eqnarray*}
|v_i^n(s)-v_i^n(0)|\leq L |s|^{1-\alpha}
\end{eqnarray*}
with $L=\frac{4M^{1-\alpha}}{1-\alpha}+2t_0^\alpha M\delta^{-\alpha}$ on interval $[0,t_n]\cap[0,t_0]$. For simplicity let us denote $\min\{t_n,t_0\}$ again by $t_n$. This finishes step 1.\\
{\sc Step 2.} In step 1 we proved that for each $n$ there exists an interval $[0,t_n]$ in which $v^n_i$ is H\"older continuous with a constant independent of $n$. Now we prove that there exists $t>0$, such that for all $n$, we have $t\leq t_n$ and thus in $[0,t]$ all functions $v^n_i$ are uniformly H\"older continuous. There are exactly 3 instances, when we bound $t_n$ from the above:
\begin{enumerate}
\item In the proof of Lemma \ref{equitech}.
\item While ensuring that for all $k\in B^0_i$ we have $|v^n_k-v^n_i|>0$ in $[0,t_n]$.
\item While ensuring that for all $k\in B^0_i$ the function $\xi$ is positive in $(0,t_n]$. 
\end{enumerate}  
If each of these bounds from above can be bounded from below by a constant independent of $n$, then so can be $t_n$ for all $n$.
\begin{enumerate}
\item In Lemma \ref{equitech}, $t_n$ was the supremum of all times $s_n$, such that for all $s\in(0,t_n]$ conditions (\ref{war1}) and (\ref{war3}) are satisfied. However from step 1 we may estimate $t_n$ better than we could in the proof of Lemma \ref{equitech}. We have
\begin{eqnarray*}
|r_n(s)|\leq 2L|s|^{1-\alpha}\ \ \ {\rm and}\ \ \ |o_n(s)|\leq 2L|s|^{2-\alpha},
\end{eqnarray*}
thus by taking
\begin{eqnarray}\label{tn1}
t_0:=\left(\frac{1}{10L}|v^n_k(0)-v^n_i(0)|\right)^{\frac{1}{1-\alpha}}
\end{eqnarray} 
we ensure that (\ref{war2}) and consequently (\ref{war1}) is satisfied.
With the same $t_0$ we obtain also condition (\ref{war3}).
\item For $k\in B^0_i$ we have $|v^n_k(0)-v^n_i(0)|> 0$, thus
\begin{eqnarray*}
|v^n_k(s)-v^n_i(s)|\geq |v^n_k(0)-v^n_i(0)|-2L|s|^{1-\alpha},
\end{eqnarray*}
which is positive for $s\leq t_0$ with $t_0$ defined by (\ref{tn1}).
\item To prove that $\xi$ has a constant sign in $[0,t_n]$ we applied Lemma \ref{equitech} concluding that $|\xi(s)|$ is positive, provided that $s$ belongs to the interval on which the thesis of Lemma \ref{equitech} holds and we proved above that this interval includes $(0,t_0]$.
\end{enumerate}
Therefore all bounds from points 1,2 and 3 are satisfied for $t_0$ defined by (\ref{tn1}) and it is clearly $n$-independent. Thus we proved that there exists an interval $[0,t]$ with $t\geq t_0$ in which all functions $v_i^n$ are uniformly H\" older continuous.
\end{proof}

\subsection{Definition of the solution}
Before we define the solution let us recall property 6 from Proposition \ref{properties}, which basically states that the trajectories of the particles cannot separate if they stick together at some point. This is an obvious consequence of the uniqueness for the approximate solutions. However, since $\psi$ is singular at $0$ it may happen that the solutions of the (C-S) model with $\psi$ are not unique and that the trajectories may split as in the case of the well known example $\dot{y}=cx^\frac{1}{3}$. In fact a loss of uniqueness may happen at each time $t$, such that there exist $i$ and $j$, such that $x_i(t)=x_j(t)$. It is problematic because such times $t$ include not only each time of a collision but also each time at which some particles are stuck together. Thus if for example two particles $x_i$ and $x_j$ start with the same position and velocity, then we may lose uniqueness at an arbitrary time $t>0$. Therefore we will enforce that the once stuck trajectories cannot separate. We will do this by replacing equation (\ref{cs2}) with (\ref{cs3}), which does not distinguish trajectories that once stuck together. Hereinafter we consider (C-S) model defined by (\ref{cs1}) and (\ref{cs3}). For this model we still do not have uniqueness but the times at which we lose it are restricted only to the times of collisions, which as we will prove occur in some sense rarely.\\
Thus our problem and it's solution is defined as follows.
\begin{defi}\label{sol}
Let $\{T_n\}_{n\in{\mathbb N}}\cup \{0\}$ be the set of all times of collision of some particles and for each $n$ let $0<T_n\leq T_{n+1}$. For $n\geq -1$, on each interval $[T_n,T_{n+1}]$ (we assume that $T_{-1}=0$) we consider the problem
\begin{eqnarray}\label{cs}
\left\{
\begin{array}{lll}
\frac{dx_i}{dt}=v_i,\\
\frac{dv_i}{dt}=\frac{1}{N}\sum_{k\in B_i(T_n)}(v^n_k-v^n_i)\psi_n(|x_k^n-x^n_i|),
\end{array}
\right.
\end{eqnarray}
for $t\in[T_n,T_{n+1}]$, with initial data $x(T_n), v(T_n)$.\\
\noindent
We say that $x$ solves (\ref{cs}) on the time interval $[0,T]$, with weight given by (\ref{psi}) and arbitrary initial data $x(0), v(0)$ if and only if for all $T_n$ and all $t\in(T_n,T_{n+1})$ the function $x\in (C^1([0,T]))^{Nd}$ is a weak in $(W^{2,1}([T_{n},t]))^{Nd}$ solution of (\ref{cs}), the initial data are correct (i.e. $x(0)=x(T_{-1})$ and $v(0)=v(T_{-1})$) and for some $n$, we have $T\leq T_n$.
\end{defi}
\noindent
This definition may not be clear at the first glance. It is somewhat weaker than a weak solution but stronger than an a.e. solution. Such definition is based on the idea described in section \ref{outline}: the solution exists in a weak sense between two collision times $T_{n-1}$ and $T_n$. However as it approaches $T_n$, the second derivative of $x$ may blow up. Despite this $v$ is still continuous in a left sided neighborhood of $T_n$ and has a limit at $T_n^-$. Therefore we may continuously define it at $T_n$ ensuring existence of unique initial data for local weak existence in $[T_n,T_{n+1})$.

\subsection{Existence up to the time of collision}\label{exi}
Before we begin let us state the following simple remark.
\begin{rem}\rm\label{sususus}
Property 4 from Proposition \ref{properties} implies equicontinuity of $x^n$, thus by Arzela-Ascoli theorem there exists a $(C([0,T]))^{Nd}$ convergent subsequence $x^{n_k}$. From this point we pick one of such convergent subsequences and aim to prove that it has a $(C^1([0,T]))^{Nd}$ convergent subsequence. For simplicity of notation we will assume that $x^n=x^{n_k}$. 
\end{rem}
\noindent
In this section we will prove that the approximate solutions converge in every interval $[0,t]\subset[0,T_0)$, where $T_0$ is the time of the first collision of the particles. Additionaly we will prove that their limit is a weak solution in $(W^{2,1}([0,t]))^{Nd}$. Let us begin with defining $T_0$ by means of the approximate solutions:
\begin{eqnarray*}
T_0:=\inf\{t>0: \min_{\stackrel{i=1,...,N}{j\in B_i(0)}}\lim_{n\to\infty}|x_i^n-x_j^n|=0\}.
\end{eqnarray*}
Note that the limit in the above definition exists, since we are restricted to a $(C([0,T]))^{Nd}$ convergent subsequence.
\begin{rem}\rm\label{t0}
Clearly if $t<T_0$ then there exists $\delta>0$, such that
\begin{eqnarray*}
\min_{\stackrel{i=1,...,N}{j\in B_i(0)}}\lim_{n\to\infty}|x_i^n-x_j^n|>\delta,
\end{eqnarray*}
which further implies that for all $i,j$, there exists $n_0$ such that for all $n>n_0$, we have $|x_i^n-x_j^n|>\delta$. On the other hand
\begin{eqnarray*}
\lim_{n\to\infty}|x_i^{n}(T_0)-x_j^{n}(T_0)|=0
\end{eqnarray*}
and assuming that  $x$ is a $(C([0,T_0]))^{Nd}$ limit of $x^{n}$, we have $x_i(T_0)=x_j(T_0)$, which means that $T_0$ is a point of collision for $x$.
\end{rem}

\begin{prop}\label{odc}
For $n=1,2,...$ let $x^n$ be a solution to C-S system on interval $[0,T]$ with weight $\psi_n$ and an independent of $n$ initial data $x(0)$ and $v(0)$. There exists an interval $[0,T_0)$ such that for any $[0,t]\in[0,T_0)$ solutions $x^n$ have a subsequence that converges in $(C^{1}([0,t]))^{Nd}$.
\end{prop}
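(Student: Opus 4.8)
The plan is to upgrade the uniformly convergent subsequence supplied by Remark \ref{sususus} to a $C^1$-convergent one by showing that the velocities $v^n=\dot x^n$ admit a uniformly convergent subsequence on $[0,t]$. Indeed, once $v^{n}\to w$ uniformly while $x^n\to x$ uniformly, passing to the limit in $x^n_i(s)=x_i(0)+\int_0^s v^n_i\,d\theta$ identifies $w=\dot x$ and forces $x\in(C^1)^{Nd}$, which is precisely $(C^1([0,t]))^{Nd}$ convergence. By the Arzela--Ascoli theorem it therefore suffices to prove that, for each fixed $[0,t]\subset[0,T_0)$, the family $\{v^n\}$ is uniformly bounded and uniformly equicontinuous on $[0,t]$. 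Uniform boundedness is immediate from property 4 of Proposition \ref{properties}, the initial data being $n$-independent, so the entire difficulty is equicontinuity, where the singularity of $\psi$ must be absorbed.

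First I would dispose of a neighbourhood of the origin. Here Lemma \ref{equi} does the work directly: it furnishes an interval $[0,t_0]$, with $t_0>0$ independent of $n$ (one may take $t_0$ as in (\ref{tn1})), on which all $v^n_i$ are \emph{uniformly} H\"older continuous. Moreover, by Lemma \ref{equitech} and Step 2 of the proof of Lemma \ref{equi}, the particles sharing the initial position of $x_i$ but not its velocity have, by time $t_0$, separated from $x_i$ by a distance bounded below independently of $n$.

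Next I would treat the complementary interval $[t_0,t]$. Since $t<T_0$, no two distinct trajectories coincide on $[t_0,t]$, so by Remark \ref{t0} the limit distances $\lim_n|x^n_i-x^n_j|$ are continuous and strictly positive there for every pair with $j\in B_i(0)$, hence bounded below by some $\delta>0$ on this compact interval; consequently $|x^n_i-x^n_j|>\delta$ on $[t_0,t]$ for all large $n$. For $n>\delta^{-\alpha}+1$ this gives $\psi_n(|x^n_i-x^n_j|)=\psi(|x^n_i-x^n_j|)\le\delta^{-\alpha}$ as in Remark \ref{stab}, so from the reduced equation (\ref{cs3}) and the uniform bound $M$ of property 4,
\begin{eqnarray*}
|\dot v^n_i|\leq\frac{1}{N}\sum_{k\in B_i(0)}|v^n_k-v^n_i|\,\psi_n(|x^n_k-x^n_i|)\leq 2M\delta^{-\alpha},
\end{eqnarray*}
uniformly in $n$, so $v^n$ is uniformly Lipschitz on $[t_0,t]$. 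Splicing this with the uniform H\"older bound on $[0,t_0]$ yields a common modulus of continuity on all of $[0,t]$; the finitely many indices $n\le\delta^{-\alpha}+1$ are harmless, since for each such $n$ property 5 bounds $\dot v^n_i$ by $2M(n)n$, making $v^n$ individually Lipschitz, hence uniformly continuous on the compact $[0,t]$.

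The main obstacle is genuinely the behaviour near the starting time for those particles which begin at the same position as $x_i$ but with a different velocity: there $|x^n_k-x^n_i|$ vanishes at $t=0$ and $\psi_n$ blows up, so the bound $\psi_n\le\delta^{-\alpha}$ is simply unavailable. This is exactly the scenario controlled by Lemma \ref{equi}, whose proof trades $\psi$ for its H\"older primitive $\Psi(s)=\frac{1}{1-\alpha}s^{1-\alpha}$ and exploits the structural fact that $t\mapsto\psi(|x^n_k-x^n_i|)$ always enters multiplied by $v^n_k-v^n_i$. With equicontinuity established, Arzela--Ascoli produces a uniformly convergent subsequence $v^{n}\to w$, and the identification $w=\dot x$ described above completes the argument.
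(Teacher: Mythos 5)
Your proposal is correct and takes essentially the same route as the paper's proof: uniform H\"older continuity of $v^n$ near $t=0$ via Lemma~\ref{equi}, a uniform lower bound on inter-particle distances on $[t_0,t]\subset[0,T_0)$ yielding a uniform Lipschitz bound on $\dot v^n$ there, and Arzela--Ascoli to extract the $(C^1)^{Nd}$-convergent subsequence. The only differences are cosmetic: the paper splits into the cases of all-distinct versus coinciding initial positions (handling the all-equal-velocities case separately via property 7), while you give a unified treatment and spell out the identification $w=\dot x$ that the paper leaves implicit.
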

\begin{proof}
If for all $i,j=1,...,N$ initial velocity $v_i(0)=v_j(0)$ then by property 7 from Proposition \ref{properties}, we have $v^n\equiv v(0)$ for all $n$ and the assertion holds with $T_0=T$. From this point we assume that there exist $i,j=1,...,N$ such that $v_i(0)\neq v_j(0)$.
Recall $B_i(0)$ defined by (\ref{bi}) -- the set of all indexes which are directly involved in the evolution of $v_i^n$ -- from this point we will only consider $j\in B_i(0)$.
There are two possibilities:
\begin{description}
\item[(A)] For all $i$ and $j\in B_i(0)$ we have $x_j(0)\neq x_i(0)$.
\item[(B)] There exists $i$ and $j\in B_i(0)$ such that $x_j(0)=x_i(0)$.
\end{description}
{\bf (A)} In this case there exists $\delta$ such that for all $i=1,..,N$ and all $j\in B_i(0)$ we have $|x_i(0)-x_j(0)|>\delta$ and by Remark \ref{t0} for all $t<T_0$ there exist $\delta_t\in(0,\delta]$, such that for all $i,j$ we have $|x_i-x_j|>\delta_t$ on $[0,t]$, which implies that $\psi(|x_i-x_j|)\leq \delta^{-\alpha}$ and all velocities $v^n$ are uniformly Lipschitz continuous on $[0,t]$. Therefore by Arzela-Ascoli theorem there exists a $(C^1([0,t]))^{Nd}$ convergent subsequence of $x^n$.\\
{\bf (B)}
In the second case, there exist $i$ and $j$, such that $x_i(0)=x_j(0)$ and $v_i(0)\neq v_j(0)$ and we may not proceed as in case {\bf (A)} as there is no such $\delta>0$, that $|x_i(0)-x_j(0)|>\delta$. However for this situation we have prepared Lemma \ref{equi}, which implies uniform H\" older continuity of $v^n$ in some neighborhood of $0$. Therefore for sufficiently small $t_0$ and $j\in B_i(0)$, such that $x_j(0)=x_i(0)$, we have
\begin{eqnarray*}
|x^n_i(s)-x^n_j(s)|\geq s\left(|v_i(0)-v_j(0)|-2Ls^{1-\alpha}\right)\geq s\frac{1}{2}|v_i(0)-v_j(0)|=:\delta_s>0
\end{eqnarray*}
for $s\in [0,t_0]$. On the other hand, for all $j\in B_i(0)$ such that $x_j(0)\neq x_i(0)$, from property 4 of Proposition \ref{properties}, we have
\begin{eqnarray*}
|x^n_i(s)-x^n_j(s)|\geq |x_i(0)-x_j(0)|-2Ms\geq \delta_s
\end{eqnarray*}
for all $n=1,2,...$ and all $s\in[0,t_1]$ with $0<t_1<1$ possibly smaller than $t_0$. Thus in $t_1$ we end up in a situation from case {\bf (A)} with
\begin{eqnarray*}
|x^n_i(t_1)-x^n_j(t_1)|\geq\delta_{t_1}
\end{eqnarray*}
and all velocities $v^n$ are uniformly H\" older continuous on $[0,t_1]$ and uniformly Lipschitz continuous on $[t_1,t]$ for all $t<T_0$. Again by Arzela-Ascoli theorem, there exists a $(C^1([0,t]))^{Nd}$ convergent subsequence of $x^n$. 
\end{proof}
\begin{rem}\rm\label{subseq}
As in Remark \ref{sususus}, even though $x$ from Proposition \ref{odc} is a limit of some subsequence of $x^n$, we will assume that it is in fact a limit of the whole sequence $x^n$ (by restricting the approximate solutions to only those, which approximate $x$). Such assumption will pose no threat to our reasonings as long as they will not involve uniqueness of $x$.
\end{rem}
\noindent

\begin{coro}\label{coroodc}
Let $x$ be as in Remark \ref{subseq}. Then $x$ is a local classical solution to C-S system in the interval $(0,T_0)$. Moreover
\begin{enumerate}
\item For all $i,j=1,...,N$, we have $|x_j-x_i|>0$ in $(0,T_0)$. 
\item The function $x$ is smooth in $(0,T_0)$.
\end{enumerate}
\end{coro}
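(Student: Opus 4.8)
The plan is to prove the separation statement (item~1) first, since it is exactly what renders the singular weight harmless on compact subintervals of $(0,T_0)$, and then to use it both to pass to the limit in the equation and to bootstrap regularity. Throughout I work with a fixed pair $i$ and $j\in B_i(0)$; for $j\notin B_i(0)$ one has $x_j^n\equiv x_i^n$ for every $n$ by property~6 of Proposition~\ref{properties}, hence $x_j\equiv x_i$, so that degenerate pair is (and must be) excluded from the separation claim, consistently with the fact that the minimum defining $T_0$ only ranges over $j\in B_i(0)$.

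For item~1, fix $s\in(0,T_0)$. By the very definition of $T_0$ as the infimum of the times at which $\min_{i,\,j\in B_i(0)}\lim_{n\to\infty}|x_i^n-x_j^n|$ vanishes, this minimum is strictly positive at the time $s$ (otherwise $s$ would lie in the defining set and force $s\ge T_0$). Since the chosen subsequence converges in $(C([0,T]))^{Nd}$ we have $\lim_{n\to\infty}|x_i^n(s)-x_j^n(s)|=|x_i(s)-x_j(s)|$, whence $|x_i(s)-x_j(s)|>0$. This is precisely where the endpoint $0$ must be dropped: in case \textbf{(B)} of Proposition~\ref{odc} some particles coincide at $s=0$, yet separate for $s>0$, which is what makes $T_0>0$.

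For the main claim that $x$ is a classical solution, fix a compact subinterval $[t_1,t_2]\subset(0,T_0)$ with $t_1>0$. By item~1 and the uniform convergence $x^n\to x$ there is $\delta>0$ with $|x_i^n-x_j^n|\ge\delta$ on $[t_1,t_2]$ for all large $n$ and all $i$, $j\in B_i(0)$; consequently, for $n>\delta^{-\alpha}+1$, Remark~\ref{stab} gives $\psi_n(|x_i^n-x_j^n|)=\psi(|x_i^n-x_j^n|)\le\delta^{-\alpha}$ there. Because $C^1$ convergence does not by itself transmit to the second derivatives, I would not differentiate but rather pass to the limit in the integrated form
\begin{eqnarray*}
v_i^n(s)=v_i^n(t_1)+\frac{1}{N}\int_{t_1}^s\sum_{k\in B_i(0)}(v_k^n-v_i^n)\psi_n(|x_k^n-x_i^n|)\,d\theta.
\end{eqnarray*}
On $[t_1,t_2]$ the integrand is uniformly bounded (by $2M\delta^{-\alpha}$) and converges uniformly to $(v_k-v_i)\psi(|x_k-x_i|)$, since $v^n\to v$ and $x^n\to x$ uniformly and $\psi$ is continuous on $[\delta,\infty)$; the limit passage (directly from uniform convergence, or via Lemma~\ref{vit}) yields
\begin{eqnarray*}
v_i(s)=v_i(t_1)+\frac{1}{N}\int_{t_1}^s\sum_{k\in B_i(0)}(v_k-v_i)\psi(|x_k-x_i|)\,d\theta.
\end{eqnarray*}
The integrand on the right is continuous on $[t_1,t_2]$, so $v_i\in C^1$ there with $\dot v_i=\frac{1}{N}\sum_{k\in B_i(0)}(v_k-v_i)\psi(|x_k-x_i|)$ pointwise; thus $x$ solves (\ref{cs1})--(\ref{cs3}) classically on $[t_1,t_2]$, and since $t_1,t_2$ were arbitrary, on all of $(0,T_0)$.

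Finally, item~2 follows by the same bootstrap used for property~1 of Proposition~\ref{properties}: on $[t_1,t_2]$ the right-hand side of the velocity equation is a $C^\infty$ function of $(x,v)$ because $\psi$ is smooth on $[\delta,\infty)$ and the arguments stay bounded away from $0$; hence a $C^1$ solution is automatically $C^2$, then $C^3$, and so on, giving $x\in C^\infty$ on every such $[t_1,t_2]$, i.e. on $(0,T_0)$. I expect the only genuinely delicate point to be the justification of the limit passage — the legitimacy of replacing $\psi_n$ by $\psi$ uniformly on $[t_1,t_2]$ and of commuting limit and integral — and this is exactly what item~1 combined with Remark~\ref{stab} supplies.
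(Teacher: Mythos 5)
Your proposal is correct, and it reaches the conclusion by a genuinely different route than the paper. Both proofs start identically: assertion 1 is read off from the definition of $T_0$ (your contradiction argument, and your remark that pairs $j\notin B_i(0)$ must be excluded since then $x_j\equiv x_i$, are exactly the right reading of the statement). After that the orders are reversed. The paper proves smoothness \emph{first}: separation makes $\psi_n$ coincide with the smooth bounded $\psi$ on compacts of $(0,T_0)$, so all derivatives of $x^n$ are uniformly bounded there (by iterated differentiation of the approximate system), and Arzela--Ascoli in every $C^k$ gives $x\in C^\infty$; the equation is then obtained by passing to a $(C^2)$ limit of the approximate systems, so that $\dot v^n\to\dot v$ pointwise and nothing more needs to be said. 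You instead prove the \emph{equation} first, passing to the limit in the integrated form $v_i^n(s)=v_i^n(t_1)+\frac1N\int_{t_1}^s\sum_k(v^n_k-v^n_i)\psi_n(|x^n_k-x^n_i|)\,d\theta$, which needs only the uniform convergence of $x^n$, $v^n$ already secured in Proposition \ref{odc} together with the uniform bound $\psi_n\le\delta^{-\alpha}$; smoothness then comes from bootstrapping the limit ODE itself ($C^1\Rightarrow C^2\Rightarrow\cdots$). What your route buys is economy of hypotheses: you never need uniform bounds on \emph{higher} derivatives of the approximations (a step the paper invokes but does not spell out), only the $C^1$ compactness already proved. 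What the paper's route buys is brevity: once smoothness of $x$ is in hand, the identification of the limit equation is immediate and requires no integral-form argument. Both hinge on the same key fact — assertion 1 plus Remark \ref{stab} render the weight regular and $n$-independent on compact subintervals — so either write-up is acceptable.
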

\begin{proof}
By the definition of $T_0$ we get assertion 1, which on the other hand implies that in a neighborhood of each $t\in(0,T_0)$ all the derivatives of $x^n$ are uniformly bounded, which by Arzela--Ascoli theorem implies that $x$ is smooth in $(0,T_0)$. With this, to prove that $x$ solves C-S system with weight $\psi$, it suffices to take a $(C^2([t-\epsilon,t+\epsilon]))^{Nd}$ limit of systems associated with weights $\psi_n$, with $[t-\epsilon,t+\epsilon]\subset(0,T_0)$.
\end{proof}
\noindent
Our next step is to show that the function $x$ actually satisfies our problem in a weak sense in every interval $[0,t]\subset[0,T_0)$ (though to prove that it satisfies Definition \ref{sol} we still nead continuity of $v$ at $T_0$).

\begin{prop}\label{spelnia}
For all $t\in[0,T_0]$ the function $x$ is a weak $(W^{2,1}([0,t]))^{Nd}$ solution of (\ref{cs}).
\end{prop}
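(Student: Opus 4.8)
The plan is to pass to the limit in the weak formulation of the approximate systems $(\ref{n})$, using the convergence established in Proposition \ref{odc} together with the uniform bounds from Proposition \ref{properties}. First I would fix $t\in[0,T_0)$ (the endpoint $T_0$ itself can be recovered afterwards by letting $t\uparrow T_0$, since the relevant bounds are uniform) and write down what it means for $x$ to be a weak $(W^{2,1}([0,t]))^{Nd}$ solution of $(\ref{cs})$: for every test function $\varphi\in C^\infty_c((0,t))$ and every $i=1,\dots,N$,
\begin{eqnarray*}
-\int_0^t \dot v_i\,\dot\varphi\,d\theta=\int_0^t\left(\frac{1}{N}\sum_{k\in B_i(0)}(v_k-v_i)\psi(|x_k-x_i|)\right)\varphi\,d\theta,
\end{eqnarray*}
together with the first-order relation $\dot x_i=v_i$, which already holds since $x\in(C^1)^{Nd}$. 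The strategy is to take the analogous identity for each $x^n$ (which holds classically, hence weakly) and pass to the limit $n\to\infty$ on both sides.

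The left-hand side is the easy part: since $x^n\to x$ in $(C^1([0,t]))^{Nd}$ by Proposition \ref{odc}, we have $v^n_i\to v_i$ uniformly, so $\int_0^t v^n_i\dot\varphi\,d\theta\to\int_0^t v_i\dot\varphi\,d\theta$. The first-order equation $\dot x^n_i=v^n_i$ likewise passes to the limit, giving $\dot x_i=v_i$. The content is therefore entirely in the right-hand side, and the main obstacle is the convergence
\begin{eqnarray*}
\int_0^t (v^n_k-v^n_i)\psi_n(|x^n_k-x^n_i|)\varphi\,d\theta\longrightarrow \int_0^t (v_k-v_i)\psi(|x_k-x_i|)\varphi\,d\theta
\end{eqnarray*}
for each fixed $k\in B_i(0)$. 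The difficulty is that $\psi_n(|x^n_k-x^n_i|)$ need not converge uniformly, and indeed may be unbounded near the collision time, so I cannot simply pass the limit under the integral pointwise; this is where I expect to spend the real effort.

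To handle the singular term I would argue as follows. The factors $v^n_k-v^n_i$ are uniformly bounded by $2M$ (property 4) and converge uniformly to $v_k-v_i$, so by Lemma \ref{weakstrong} it suffices to show that the sequence $g_n:=\psi_n(|x^n_k-x^n_i|)$ converges weakly in $L^1([0,t])$ to $g:=\psi(|x_k-x_i|)$. For this I would first establish that $g_n$ is uniformly integrable on $[0,t]$: away from collisions $\psi_n\le\delta^{-\alpha}$ is bounded, while near a collision the key is the H\"older estimate from Lemma \ref{equi} and the comparison $\psi_n\le\psi$, which together with the primitive bound $\int_0^s\psi(|x^n_k-x^n_i|)|\dot{(x^n_k-x^n_i)}|\,d\theta=\Psi(|x^n_k-x^n_i|(s))$ used in the proof of Lemma \ref{equi} gives a uniform $L^1$ control of $g_n$ independent of $n$; uniform integrability then follows because the only possible concentration is at isolated collision points, where the Hölder bound forces the mass to vanish. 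Since $g_n\to g$ a.e. on $[0,t]$ (the two particles are separated off a null set, by assertion 1 of Corollary \ref{coroodc} on $(0,T_0)$, and there $\psi_n$ eventually coincides with $\psi$), Vitali's convergence theorem (Lemma \ref{vit}) upgrades this to $g_n\to g$ strongly, hence weakly, in $L^1$. Applying Lemma \ref{weakstrong} to the product $(v^n_k-v^n_i)g_n$ then yields the desired weak-$L^1$ convergence of the right-hand side, and integrating against the bounded test function $\varphi$ completes the passage to the limit. Finally, the fact that the limiting right-hand side lies in $L^1([0,t])$ shows $\dot v_i\in W^{1,1}$, i.e. $x\in(W^{2,1}([0,t]))^{Nd}$, which is exactly the claimed regularity.
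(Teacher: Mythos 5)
Your proposal is correct and follows the same overall scheme as the paper's proof: pass to the limit in the weak formulation of the approximate systems, handle the left-hand side by the $(C^1)^{Nd}$ convergence of Proposition \ref{odc}, and reduce the right-hand side to Lemma \ref{weakstrong} with $f_n=v^n_k-v^n_i$ and the singular weights as $g_n$. The difference lies in how the hypothesis $g_n\rightharpoonup g$ in $L^1$ is secured. The paper's proof simply cites Lemma \ref{weakstrong} without verifying that hypothesis, and only in the remark following the proof gives a complete argument, applying Vitali's theorem (Lemma \ref{vit}) to the full acceleration $\dot{v}^n$, whose uniform integrability comes from the H\"older bound $\int_0^{s_c}|\dot{v}^n|\,ds\leq L|s_c|^{1-\alpha}$ of Lemma \ref{equi} plus boundedness away from $s=0$. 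You instead apply Vitali to the weight $g_n=\psi_n(|x^n_k-x^n_i|)$ itself, proving its uniform integrability and a.e.\ convergence, hence strong $L^1$ convergence, before invoking Lemma \ref{weakstrong} (which is then more than you need: uniform convergence of $f_n$ together with $g_n\to g$ in $L^1$ already gives $f_ng_n\to fg$ in $L^1$). This variant is legitimate and in fact fills the gap left in the paper's proof proper. Two points to tighten. First, the ``primitive bound'' $\int_0^s\psi(|x^n_k-x^n_i|)\,|(x^n_k-x^n_i)'|\,d\theta=\Psi(|x^n_k-x^n_i|(s))$ controls $\psi$ multiplied by the velocity difference, not $\psi$ alone; the clean way to get uniform $L^1$ control of $g_n$ near $s=0$ for $k\in B^0_i$ is the lower bound $|x^n_k(s)-x^n_i(s)|\geq \frac{s}{2}|v_k(0)-v_i(0)|$ (a consequence of Lemma \ref{equi}, exactly as in case {\bf (B)} of the proof of Proposition \ref{odc}), which combined with $\psi_n\leq\psi$ yields the $n$-independent integrable majorant $Cs^{-\alpha}$. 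Second, in your weak formulation the left-hand side should read $-\int_0^t v_i\,\dot\varphi\,d\theta$ rather than $-\int_0^t \dot{v}_i\,\dot\varphi\,d\theta$; your subsequent argument treats it correctly, so this is only a typo.
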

\begin{proof}
From Proposition \ref{odc} and Corollary \ref{coroodc} we know that $x\in (C^1([0,T_0)))^{Nd}$ and that $T_0$ is the time of the first collision of the particles. It suffices to show that $x$ satisfies (\ref{cs}) weakly in intervals $[0,t]$ for $t\nearrow T_0$. Since $x^n$ satisfy $(\ref{cs})_1$ and $x^n\to x$ in $(C^1([0,t]))^{Nd}$, then $x$ satisfies $(\ref{cs})_1$ with $v=\lim_{n\to\infty}v^n$. Now for $\phi\in (C_c^\infty([0,t]))^d$, we have
\begin{eqnarray*}
\int_0^{t}v_i^n\dot{\phi} ds =-\int_0^{t}\dot{v_i^n}\phi ds=-\int_0^{t}\frac{1}{N}\sum_{k=1}^N(v_k^n-v_i^n)\psi_n(|x_i^n-x_k^n|)\phi ds
\end{eqnarray*}
and the left-hand side converges to $\int_0^{t} v\dot{\phi} ds$. Thus it remains to show that the right-hand side converges to $[-\int_0^{t}\dot{v}\phi ds]$, where
\begin{eqnarray}\label{podzial}
\dot{v}:=\frac{1}{N}\sum_{k=1}^N(v_k-v_i)\psi(|x_i-x_k|).
\end{eqnarray}
To this end we require for example that $\dot{v}^n\rightharpoonup\dot{v}$ in $(L^1([0,t]))^{Nd}$, which follows from Lemma \ref{weakstrong} applied to functions $f_n=v^n_k-v^n_i$, $f=v_k-v_i$, $g_n=\psi(|x^n_i-x^n_k|)$, $g=\psi(|x_i-x_k|)$.
\end{proof}
\begin{rem}\rm
After arriving at (\ref{podzial}) we may apply a stronger argument that $\dot{v^n}\to \dot{v}$ in $(L^1([0,t]))^{Nd}$. Clearly $x^n\to x$ and $v^n\to v$ a.e. and thus also $\dot{v^n}\to\dot{v}$ a.e.. Moreover $\dot{v^n}\in (L^1([0,t]))^{Nd}$ for all $n$. Therefore if we show that functions $v^n$ are uniformly integrable, then by Lemma \ref{vit} the proof will be finished. Since $\dot{v^n}$ are uniformly bounded in $[\delta,t]$ for all $\delta>0$ (as there are no collisions in $(0,T_0)$\footnote{perhaps a better argument is that it follows from Lemma \ref{equi} and property 4 from Proposition \ref{properties}}), then the set $\{0\leq s\leq t: |\dot{v^n}|>c\}$ is included in some interval $[0,s_c]$ with $s_c\to 0$ as $c\to\infty$. Therefore by Lemma \ref{equi} we get
\begin{eqnarray*}
\int_0^{t}|\dot{v^n}|\chi_{\{s:|\dot{v^n}|>c\}} ds\leq \int_0^{s_c}|\dot{v^n}|ds\leq L|s_c|^{1-\alpha}\to 0
\end{eqnarray*}
as $c\to\infty$, which proves that $\dot{v^n}$ are uniformly integrable.
\end{rem}
\noindent
As our last effort in this section let us make an obvious remark involving properties stated in Proposition \ref{properties}.

\begin{coro}\label{properties2}
Properties 1,2,6,7 from Proposition \ref{properties} remain true also for the solution $x$ on $[0,T_0)$. Moreover the following version of properties 3 and 4 holds:
\begin{description}
\item[{\it $(4^{'})$}]
For all initial data $x(0)$ and $v(0)$ and all $i=1,...,N$, we have
\begin{eqnarray*}
\|v_i\|_{L^\infty([0,T_0))}\leq M,
\end{eqnarray*}
where $M$ is the constant from property 4 from Proposition \ref{properties}. 
\end{description}
\end{coro}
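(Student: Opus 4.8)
The plan is to transfer each property from the approximate solutions $x^n$ to their $(C^1([0,t]))^{Nd}$ limit $x$ (Remark \ref{subseq}), exploiting the three facts already established: $x\in(C^1([0,T_0)))^{Nd}$ (Proposition \ref{odc}), $x$ is smooth and a classical solution of the C-S system with weight $\psi$ on $(0,T_0)$ (Corollary \ref{coroodc} and Proposition \ref{spelnia}), and $x^n\to x$ together with $v^n\to v$ uniformly on every $[0,t]\subset[0,T_0)$. The properties split into three groups according to how they are inherited in the limit.

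The two easy ones pass through the uniform limit directly. For $(4')$ I would note that property 4 gives $\|v_i^n\|_{L^\infty([0,t])}\le M$ with $M$ independent of $n$ (the initial data being $n$-independent and bounded), and since $v_i^n\to v_i$ pointwise on $[0,t]$ the same bound survives in the limit; letting $t\nearrow T_0$ yields $\|v_i\|_{L^\infty([0,T_0))}\le M$. For property 2, each average $\frac1N\sum_i v_i^n$ equals the $n$-independent constant $\frac1N\sum_i v_i(0)$, so its uniform limit $\frac1N\sum_i v_i$ is the same constant. Property 1 is essentially Corollary \ref{coroodc}: at any time where all positions are distinct the right-hand side of the limiting equation is differentiable, and bootstrapping gives $C^\infty$ regularity there, which on $(0,T_0)$ holds by the definition of $T_0$.

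The delicate points are properties 6 and 7, because uniform convergence transfers equalities of positions and velocities only from the level $n$ to the limit and not conversely, so I cannot simply read off $x_i^n(t)=x_j^n(t)$ from $x_i(t)=x_j(t)$. For property 6 I would argue by cases on membership in $B_i(0)$. If $j\notin B_i(0)$, then $x_i(0)=x_j(0)$ and $v_i(0)=v_j(0)$, so property 6 for $x^n$ forces $x_i^n\equiv x_j^n$ and hence $x_i\equiv x_j$ on all of $[0,T_0)$; pairs of this kind are exactly the ones that meet the hypothesis of property 6. If instead $j\in B_i(0)$, then the trajectories are distinct throughout $(0,T_0)$ by the definition of $T_0$ (Remark \ref{t0}), while at $t=0$ membership in $B_i(0)$ means the pair differs in position or in velocity; hence the hypothesis $x_i(t)=x_j(t)$, $v_i(t)=v_j(t)$ can never be met for such a pair inside $[0,T_0)$. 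Thus property 6 holds.

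For property 7 I would avoid triggering the $n$-level statement and instead use the functional $r(t):=\sum_{i,j=1}^N(v_i(t)-v_j(t))^2$. Since $v^n\to v$ uniformly, $r=\lim_n r_n$ pointwise, and each $r_n$ is non-increasing by the computation in the proof of property 3, so $r$ is a nonnegative non-increasing function on $[0,T_0)$. If $v_i(t_*)=v_j(t_*)$ for all $i,j$ at some $t_*\in[0,T_0)$, then $r(t_*)=0$, whence $r\equiv 0$ on $[t_*,T_0)$; this means all velocities coincide at each later time, and by property 2 their common value is the constant average, so $v$ is constant on $[t_*,T_0)$. The main obstacle throughout is precisely this one-directional nature of limit passage for the equality hypotheses in properties 6 and 7, resolved by using the structure of $B_i(0)$ together with the distinctness of trajectories in $(0,T_0)$ for property 6, and the monotone functional $r$ for property 7.
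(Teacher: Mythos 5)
Your proposal is correct, but it runs on a different engine than the paper's (very terse) proof. The paper treats $x$ as a classical, smooth solution on $(0,T_0)$ (Corollary \ref{coroodc}) and simply re-runs the structural computations of Proposition \ref{properties} on $x$ itself: properties 1, 2, $4'$ ``follow by similar argumentation,'' property 6 is attributed to the very definition of the system (the replacement of (\ref{cs2}) by $(\ref{cs})_2$, which by construction never separates stuck trajectories), and property 7 is obtained by differentiating $r(t)=\sum_{i,j}(v_i-v_j)^2$ along the limit solution and observing $r'\le 0$. You instead transfer everything from the approximate solutions through the uniform convergence $x^n\to x$, $v^n\to v$: the $L^\infty$ bound and the constancy of the average pass through pointwise limits, the monotonicity of $r_n$ passes to $r$ (so you never differentiate the limit), and for property 6 you give the case analysis on $B_i(0)$ -- limit passage when $j\notin B_i(0)$, vacuousness of the hypothesis when $j\in B_i(0)$ via Remark \ref{t0} -- which is a more explicit justification of what the paper waves at with ``by definition of our system.'' The trade-off: your inheritance argument is more robust (it needs only continuity of $v$, not smoothness, so it would survive verbatim up to $T_0$ once $v$ is continuously extended, and it makes the non-separation claim airtight), while the paper's direct re-derivation is shorter given the smoothness already in hand and does not depend on tracking which subsequence converges. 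Both are complete proofs of the stated corollary.
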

\begin{proof}
Properties 1,2,$4^{'}$ follow by similar argumentation as in the proof of Proposition \ref{properties}. Property 6 follows by definition of our system (namely by substituing equation (\ref{cs2}) with $(\ref{cs})_2$) and property 7 follows by calculating the derivative of
\begin{eqnarray*}
r(t)=\sum_{i,j}(v_i-v_j)^2.
\end{eqnarray*}
\end{proof}
\begin{rem}\rm
Property 5 from Proposition \ref{properties} clearly does not hold on $[0,T_0)$ even though it holds on $[0,t]$ for $t\nearrow T_0$ (to see this, we simply substitute in property 5, $n$ with $\lfloor\delta_t\rfloor+2$, where $\delta_t$ is defined in part {\bf (A)} of the proof of Proposition \ref{odc}).
\end{rem}

\subsection{Clustering at the time of collision}\label{clus}
In the previous section we established existence of solutions on the interval $[0,T_0)$, where $T_0$ is time of the first collision of some pair of particles. The solution $x$ belongs to $(W^{2,1}([0,t]))^{Nd}\cap (C^{1}([0,T_0)))^{Nd}\cap (C([0,T_0]))^{Nd}$ for all $0<t<T_0$ and satisfies (\ref{cs}) in a classical sense in $(0,T_0)$ and weakly in $(W^{2,1}([0,t]))^{Nd}$. Therefore we know that $v$ is a Lipschitz continuous function in each interval $[0,t]\subset[0,T_0]$, however we do not know anything about it's behaviour in a neighborhood of $T_0$ -- with our current knowledge the limit of $v(t)$ as $t\to T_0$ may even not exist. In this section we provide a proof of continuity of $v$ on whole interval $[0,T_0]$.
\begin{defi}
For each $i,j=1,...,N$ we define a relation $i\dot{\sim} j$ if and only if $j\notin B_i(0)$ or for all $t<T_0$, we have
\begin{eqnarray*}
\int_t^{T_0}\psi(|x_i-x_j|)ds=\infty.
\end{eqnarray*}
\end{defi}
\noindent
This relation is clearly symetric and reflexive but not necessarily transitive. This leads us to another definition.
\begin{defi}
For each $i,j=1,...,N$ we define a relation $\sim$ with the following two statements:
\begin{enumerate}
\item If $i\dot{\sim} j$, then $i\sim j$.
\item For $i\dot{\nsim} j$, we have $i\sim j$ if and only if there exists $k$, such that $i\sim k$ and $k\sim j$. 

\end{enumerate}
\end{defi}
\begin{rem}\rm\label{inti}
Relation $\sim$ is an equivalence relation. Since $\dot{\sim}$ is symetric and reflexive then so is $\sim$. Transitivity of $\sim$ follows directly from the definition. Equivalence classes $[i]$ of $\sim$ provide us with a partition of the set of indexes $\{1,...,N\}$ with the following property: given $i,j=1,...,N$ if $j\notin [i]$, then $\psi(|x_i-x_j|)$ is integrable in every interval $[t,T_0]$.  
\end{rem}
\noindent
Now let us for each $i=1,...,N$ define $w_i=w^{t_0}_i$ by the system of ODE's
\begin{eqnarray*}
\dot{w_i}=\frac{1}{N}\sum_{k\in [i]}(w_k-w_i)\psi(|x_i-x_k|)
\end{eqnarray*}
in $[t_0,T_0)$ with the initial data $w_i(t_0)=v_i(t_0)$ for all $i=1,...,N$. All structure based properties $1,2$ and $4^{'}$ from Corollary \ref{properties2} hold also for the functions $w_i$ as in their proof we never make use of the fact that $\dot{x}=v$. We introduce the functions $w_i$ as a tool to study the evolution of $v$ in a neighborhood of $T_0$. First we ensure that $w_i$ and $v_i$ are somehow close to each other and behave in a similar way.
\begin{prop}\label{podob}
For $t\in[t_0,T_0)$, we have
\begin{eqnarray*}
|v_i(t)-w_i(t)|\leq \omega(T_0-t_0),
\end{eqnarray*}
for some nonnegative continuous function $\omega$ with $\omega(0)=0$.
\end{prop}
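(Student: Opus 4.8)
The plan is to control the difference $e_i:=v_i-w_i$, which satisfies $e_i(t_0)=0$ since $w_i(t_0)=v_i(t_0)$, by a dissipative energy estimate. Both $v$ and $w$ are driven by the \emph{same} positions $x$, so the coefficients $\psi(|x_i-x_k|)$ are identical and only the summation sets differ: $v_i$ sums over $k\in B_i(0)$ while $w_i$ sums over $k\in[i]$. Writing $S_i:=[i]\cap B_i(0)$ and noting that for $k\in[i]\setminus B_i(0)$ the particle $x_k$ shares the trajectory of $x_i$, whence $\psi(|x_i-x_k|)=\psi(0)=0$, subtraction of the two systems yields
\[
\dot e_i=\frac{1}{N}\sum_{k\in S_i}(e_k-e_i)\psi(|x_i-x_k|)+R_i,\qquad R_i:=\frac{1}{N}\sum_{k\in B_i(0)\setminus[i]}(v_k-v_i)\psi(|x_i-x_k|).
\]

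Next I would estimate $E:=\sum_i|e_i|^2$. Differentiating and substituting the above, the coupling contribution is $\frac{2}{N}\sum_i\sum_{k\in S_i}e_i\cdot(e_k-e_i)\psi(|x_i-x_k|)$. Since $\sim$ is an equivalence relation and $B$ is symmetric, the index set $\{(i,k):k\in S_i\}$ is symmetric, and $\psi(|x_i-x_k|)$ is symmetric in $i,k$; the usual swap of $i$ and $k$ then rewrites this term as $-\frac{1}{N}\sum_i\sum_{k\in S_i}|e_k-e_i|^2\psi(|x_i-x_k|)\le 0$. Dropping this dissipative term leaves $\frac{d}{dt}E\le 2\sum_i e_i\cdot R_i$, hence $\frac{d}{dt}\sqrt E\le\big(\sum_i|R_i|^2\big)^{1/2}\le\sum_i|R_i|$ by Cauchy--Schwarz and $\ell^2\le\ell^1$. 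Integrating from $t_0$ with $E(t_0)=0$ gives, for every $t\in[t_0,T_0)$,
\[
|v_i(t)-w_i(t)|\le\sqrt{E(t)}\le\int_{t_0}^{T_0}\sum_i|R_i(s)|\,ds\le\frac{2M}{N}\sum_i\sum_{k\in B_i(0)\setminus[i]}\int_{t_0}^{T_0}\psi(|x_i-x_k|)\,ds,
\]
where I used $|v_k-v_i|\le 2M$ from property $4^{'}$ of Corollary \ref{properties2}.

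To produce the claimed modulus I would set
\[
\omega(\tau):=\frac{2M}{N}\sum_i\sum_{k\in B_i(0)\setminus[i]}\int_{T_0-\tau}^{T_0}\psi(|x_i-x_k|)\,ds.
\]
By Remark \ref{inti}, for each $k\notin[i]$ the function $\psi(|x_i-x_k|)$ is integrable on $[t,T_0]$, so every tail integral is finite, continuous and nondecreasing in $\tau$ and vanishes at $\tau=0$; thus $\omega$ is nonnegative, continuous and $\omega(0)=0$, and the last display reads precisely $|v_i(t)-w_i(t)|\le\omega(T_0-t_0)$.

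I expect the only delicate points to be the dissipativity of the coupling term, which rests entirely on the symmetry of $S_i$ (i.e. on $[i]$ being an equivalence class and on $B_i(0)$ being symmetric), and the verification that the ``cross'' interactions with $k\notin[i]$ are integrable up to $T_0$, which is exactly the content of Remark \ref{inti} and guarantees $\omega(\tau)\to 0$ as $\tau\to 0$. The differentiation of $\sqrt E$ at points where $E$ vanishes is a routine technicality, handled by regularizing $E$ with an additive $\varepsilon$ and letting $\varepsilon\to 0$.
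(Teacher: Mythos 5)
Your proof is correct and follows essentially the same route as the paper's: an energy estimate on the squared difference $\sum_i|v_i-w_i|^2$, with the within-class coupling term killed by the usual symmetrization (the paper's term $I\le 0$) and the cross-class term $R_i$ controlled via the integrability of $\psi(|x_i-x_k|)$ for $k\notin[i]$ from Remark \ref{inti} (the paper's term $II$), yielding a modulus $\omega$ vanishing as $t_0\to T_0$. Your extra touches --- the explicit reconciliation of the summation sets $B_i(0)$ versus $[i]$ via $\psi(0)=0$, the $\sqrt{E}$ differential inequality, and the explicit tail-integral formula for $\omega$ --- only make explicit what the paper leaves implicit.
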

\begin{proof}
Let $r(t)=\sum_{i\in[i]}(v_i(t)-w_i(t))^2$. We have
{\small
\begin{eqnarray*}
r^{'}=\frac{2}{N}\sum_{i,j\in[i]}\left(v_i-w_i\right)\left((v_j-v_i)-(w_j-w_i)\right)\psi(|x_i-x_j|) + \frac{2}{N}\sum_{i,j\notin[i]}(v_i-w_i)(v_j-v_i)\psi(|x_i-x_j|)=: I+II.
\end{eqnarray*}
}
By the usual symetry argument
\begin{eqnarray*}
I=\frac{2}{N}\sum_{i,j\in[i]}\left((v_i-w_i)(v_j-w_j)-(v_i-w_i)^2\right)\psi(|x_i-x_j|)=\\
-\frac{1}{N}\sum_{i,j\in[i]}\left((v_i-w_i)-(v_j-w_j)\right)^2\psi(|x_i-x_j|)\leq 0.
\end{eqnarray*}
On the other hand $II$ is integrable by Remark \ref{inti}. Therefore, since $r(t_0)=0$, for $t\in[t_0,T_0)$, we have
\begin{eqnarray*}
r(t)\leq \int_{t_0}^{T_0} |II|ds=:\omega^2(T_0-t_0),
\end{eqnarray*}
 where $\omega$ is a nonnegative continuous function with $\omega(0)=0$.
\end{proof}
\noindent
Our next goal is to prove that if $i\sim j$ then $|w_i(t)-w_j(t)|\to 0$ as $t\to T_0$. However before we begin let us make another purely technical assumption that
\begin{eqnarray}\label{asum}
\sum_{i\in[i]}w_i=0.
\end{eqnarray}
This does not make our reasoning any less general since by property 2 from Corollary \ref{properties2} this sum is constant in time -- thus we may as well assume that it equals $0$. Thus our goal can be rewritten in a equivalent form: prove that
\begin{eqnarray}\label{equiv}
\lim_{t\to T_0}w_i(t)=0 \ \ \  {\rm for\ all}\ \ \ i\in[i].
\end{eqnarray}
The first step of the proof is to show the following slightly weaker assertion.

\begin{lem}
If $i\dot{\sim} j$, then there exists a sequence $s_n\to T_0$, such that $|w_i(s_n)-w_j(s_n)|\to 0$.
\end{lem}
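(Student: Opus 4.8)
The plan is to exploit that the quadratic velocity‑spread functional attached to the equivalence class of $i$ is monotone, which converts the integral divergence built into the definition of $\dot\sim$ into the desired control of the velocity gap along a subsequence.

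First I would dispose of the trivial alternative in the definition of $\dot{\sim}$. If $j\notin B_i(0)$, then $x_i\equiv x_j$ and $v_i\equiv v_j$ on $[0,T_0)$, so $w_i(t_0)=v_i(t_0)=v_j(t_0)=w_j(t_0)$; subtracting the two $w$-equations and using $|x_i-x_k|=|x_j-x_k|$ shows that $w_i-w_j$ solves the linear homogeneous ODE
\[
(w_i-w_j)^{'}=-\frac{1}{N}(w_i-w_j)\sum_{k\in[i]}\psi(|x_i-x_k|)
\]
with zero initial value, hence $w_i\equiv w_j$ and any sequence $s_n\to T_0$ works. From now on I assume the second alternative, namely $\int_t^{T_0}\psi(|x_i-x_j|)\,ds=\infty$ for every $t<T_0$.

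Next I would set up the class functional. Since the $w$-dynamics couple only indices inside one equivalence class, fix $C:=[i]=[j]$ and put $R(t):=\sum_{k,l\in C}(w_k-w_l)^2$. Repeating verbatim the symmetrization already carried out for property 3 of Proposition \ref{properties} (that computation never uses $\dot x=v$, only the antisymmetry of $w_k-w_l$ and the symmetry of $\psi(|x_k-x_l|)$ under swapping $k,l$, exactly as noted in the text preceding Proposition \ref{podob}), I obtain
\[
R^{'}(t)=-c\sum_{k,l\in C}(w_k-w_l)^2\,\psi(|x_k-x_l|)\le 0,
\]
with a positive constant $c$ (explicitly $c=2|C|/N$). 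On $[t_0,T_0)$ there are no collisions, so all arguments $|x_k-x_l|$ are strictly positive and $R$ is genuinely $C^1$ there; being nonincreasing and bounded below by $0$, it has a finite limit as $t\to T_0^-$. Integrating and letting $t\to T_0$ yields
\[
c\int_{t_0}^{T_0}\sum_{k,l\in C}(w_k-w_l)^2\,\psi(|x_k-x_l|)\,ds=R(t_0)-\lim_{t\to T_0}R(t)<\infty,
\]
and in particular, keeping only the pair $(i,j)$,
\[
\int_{t_0}^{T_0}(w_i-w_j)^2\,\psi(|x_i-x_j|)\,ds<\infty.
\]

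Finally comes the decisive step, which is short. Combining this finite weighted integral with the divergence $\int_{t_0}^{T_0}\psi(|x_i-x_j|)\,ds=\infty$ forces $\liminf_{t\to T_0}|w_i(t)-w_j(t)|=0$: were it not so, there would exist $\varepsilon>0$ and $t_1<T_0$ with $|w_i-w_j|\ge\varepsilon$ on $[t_1,T_0)$, whence
\[
\int_{t_1}^{T_0}(w_i-w_j)^2\,\psi(|x_i-x_j|)\,ds\ge\varepsilon^2\int_{t_1}^{T_0}\psi(|x_i-x_j|)\,ds=\infty,
\]
contradicting the bound above. Picking $s_n\to T_0$ realizing the liminf produces the required sequence with $|w_i(s_n)-w_j(s_n)|\to 0$. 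I expect the whole difficulty to be concentrated not in any of the calculations but in the structural observation itself: the monotone class functional is precisely what transforms the integral divergence encoded in $\dot{\sim}$ into vanishing of the velocity gap along a subsequence. The monotonicity identity is the already-proven structural computation, and the closing contradiction is routine.
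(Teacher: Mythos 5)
Your proof is correct and rests on the same structural pillar as the paper's proof --- the class functional $R(t)=\sum_{k,l\in[i]}(w_k-w_l)^2$ together with the symmetrization identity $R'=-c\sum_{k,l\in[i]}(w_k-w_l)^2\psi(|x_k-x_l|)\le 0$ (the paper writes the constant as $2$, your $c=2|[i]|/N$ is the careful bookkeeping; the discrepancy is immaterial) --- but it closes the argument by a genuinely different mechanism. The paper runs the contradiction through the differential inequality: assuming $|w_i(s)-w_j(s)|>\delta$ on $[t_0,T_0)$, it sandwiches $\delta^2<r\le NM^2$ using property $4'$ of Corollary \ref{properties2}, deduces $(\ln r)'\le -\frac{2\delta^2}{NM^2}\psi(|x_i-x_j|)$, and lets the divergent integral drive $r$ below $\delta^2$. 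You instead integrate the identity once, unconditionally, obtaining the finite dissipation bound $\int_{t_0}^{T_0}(w_i-w_j)^2\psi(|x_i-x_j|)\,ds<\infty$, after which the incompatibility with $\int_{t_1}^{T_0}\psi(|x_i-x_j|)\,ds=\infty$ is a one-line comparison. Your route needs only monotonicity and nonnegativity of $R$ (no appeal to the uniform bound $R\le NM^2$), and it leaves behind a reusable quantitative fact, namely the weighted integrability of $(w_i-w_j)^2$; the paper's Gronwall-type version yields, under its standing hypothesis, exponential decay of the whole class spread, which is marginally more information but is never used later. Two further remarks. First, you explicitly handle the branch $j\notin B_i(0)$ of the definition of $\dot{\sim}$, where $x_i\equiv x_j$, hence $\psi(|x_i-x_j|)\equiv\psi(0)=0$ and the divergence hypothesis is simply unavailable; the paper's proof tacitly assumes the integral branch, so your linear-ODE argument giving $w_i\equiv w_j$ in that case closes a small but genuine gap. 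Second, a cosmetic point: inside a class, pairs with coinciding trajectories have $|x_k-x_l|\equiv 0$, so not all arguments of $\psi$ are strictly positive on $[t_0,T_0)$ as you assert; but since the paper defines $\psi(0)=0$, those terms vanish identically and neither the regularity of $R$ nor the sign of $R'$ is affected.
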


\begin{proof}
The proof follows by contradiction. Let us assume that $i\dot{\sim} j$ and there is no such sequence $s_n$ i.e. there exists $\delta>0$, such that $|w_i(s)-w_j(s)|>\delta$ for $s\in[t_0,T_0)$. Since $i\dot{\sim} j$ both $i$ and $j$ belong to $[i]$ and thus for all $s\in [t_0,T_0)$ and for $r(s):=\sum_{k,l\in[i]}(w_k(s)-w_l(s))^2$, we have
\begin{eqnarray*}
r^{'}=\frac{2}{N}\sum_{k,l,m\in[i]}(w_k-w_l) \left((w_m-w_k)\psi(|x_k-x_m|)-(w_m-w_l)\psi(|x_l-x_m|)\right).
\end{eqnarray*}
By a similar to the proof of property 3 form Proposition \ref{properties} application of the symetry we conclude that
\begin{eqnarray*}
r^{'}=-2\sum_{k,l\in[i]}(v_k-v_l)^2\psi(|x_k-x_l|).
\end{eqnarray*}
Now since $|w_i-w_j|>\delta$ and by property $4^{'}$ from Corollary \ref{properties2} also $\delta^2<r(s)\leq NM^2$ and we have
\begin{eqnarray*}
(\ln r)^{'}\leq -2\frac{(v_i-v_j)^2}{r}\psi(|x_i-x_j|)\leq -\frac{2\delta^2}{NM^2}\psi(|x_i-x_j|)
\end{eqnarray*}
and consequently
\begin{eqnarray*}
\delta^2<r\leq e^{-\frac{2\delta^2}{NM^2}\int_{t_0}^s\psi(|x_i-x_j|)d\theta} r(t_0),
\end{eqnarray*}
which is impossible since $\int_{t_0}^s\psi(|x_i-x_j|)\to\infty$ as $s\to T_0$. Therefore no such $\delta$ exists and the proof is complete.
\end{proof}
\noindent
Our next step is a technical lemma which is vaguely based on the fact that velocities of the particles only "pull" each other but never push away (which for example means that $w_i$ which is the furthest from $0$ may not go any further away from $0$ because there is no other velocity to pull it there).

\begin{lem}\label{grav}
For each $k=1,...,d$ we denote $w^k_i$ -- the $k$-th coordinate of $w_i$ and assume that up to permutations $w_1^k(t)\leq...\leq w_N^k(t)$. Then the sums
\begin{eqnarray*}
\sum_{i=1}^lw_i^k(t),\ \ \ {\rm and}\ \ \ \sum_{i=l}^Nw_i^k(t),\ \ \ l=1,...,N
\end{eqnarray*} 
are respectively nondecreasing and nonincreasing. 
\end{lem}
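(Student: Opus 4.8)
The plan is to fix a coordinate $k$ and work with the scalars $w_i:=w_i^k$, since the system acts coordinatewise. First I would rewrite the governing ODE in the symmetric form
\begin{eqnarray*}
\dot w_i=\sum_{m=1}^N a_{im}(w_m-w_i),\qquad a_{im}:=\tfrac{1}{N}\psi(|x_i-x_m|)\ \text{if } m\in[i],\quad a_{im}:=0\ \text{otherwise},
\end{eqnarray*}
and observe that $a_{im}=a_{mi}\geq 0$: indeed $m\in[i]$ is equivalent to $i\in[m]$ because $\sim$ is an equivalence relation, and $\psi(|x_i-x_m|)$ is symmetric in $i,m$. On every compact subinterval of $[t_0,T_0)$ all distances $|x_i-x_m|$ are bounded below, since there are no collisions before $T_0$ by Corollary \ref{coroodc}; hence the coefficients $a_{im}$ are bounded and the $w_i$ are $C^1$.

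The second ingredient is the variational description of the order-statistic partial sums. Writing $S_l(t):=\sum_{i=1}^l w_{(i)}(t)$ for the sum of the $l$ smallest among $w_1(t),\dots,w_N(t)$, one has
\begin{eqnarray*}
S_l(t)=\min_{|A|=l}\sum_{i\in A}w_i(t),\qquad \sum_{i=l}^N w_{(i)}(t)=\max_{|A|=N-l+1}\sum_{i\in A}w_i(t),
\end{eqnarray*}
the minimum and maximum being over index sets $A\subset\{1,\dots,N\}$ of the prescribed size. Each $f_A(t):=\sum_{i\in A}w_i(t)$ is $C^1$, so $S_l$ is a minimum of finitely many $C^1$ functions, hence locally Lipschitz and absolutely continuous on $[t_0,T_0)$. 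It therefore suffices to show that $S_l'(t)\geq 0$ at almost every $t$, together with the analogous $\leq 0$ for the max; integrating the sign of the derivative then yields the asserted monotonicity.

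The core computation would proceed as follows. At a.e.\ $t$ where $S_l$ is differentiable, choose a minimizing set $A$, so that $g:=f_A-S_l\geq 0$ with $g(t)=0$; since $g$ is differentiable at $t$ and attains a minimum there, $g'(t)=0$, whence $S_l'(t)=f_A'(t)=\sum_{i\in A}\dot w_i$. Splitting the double sum over $A$ and its complement and using the antisymmetry of $w_m-w_i$ together with the symmetry of $a_{im}$, the ``internal'' terms with $i,m\in A$ cancel and
\begin{eqnarray*}
\sum_{i\in A}\dot w_i=\sum_{i\in A}\sum_{m\notin A}a_{im}(w_m-w_i).
\end{eqnarray*}
Because $A$ is a minimizer, $w_i\leq w_m$ for every $i\in A$ and $m\notin A$, so each summand is nonnegative and $S_l'(t)\geq 0$. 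The same argument with a maximizing set, for which $w_i\geq w_m$ when $i\in A$ and $m\notin A$, gives the reversed inequality for $\sum_{i=l}^N w_{(i)}$.

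The only delicate point is the non-smoothness created by the time-dependent reordering, i.e.\ the coincidences $w_i(t)=w_m(t)$ at which the naive sorting permutation jumps. I expect this to be the main obstacle, but it is entirely circumvented by the min/max-over-subsets representation: a minimizer automatically satisfies the separation $w_i\le w_m$ (for $i\in A$, $m\notin A$), so the derivative formula and its sign hold for whichever active set is selected, and no separate analysis of ties is needed.
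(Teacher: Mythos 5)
Your proof is correct, and its algebraic core is the same as the paper's: split the derivative of the partial sum into the internal double sum, which vanishes by antisymmetry of $w_m-w_i$ against the symmetry of the coefficients $\tfrac{1}{N}\psi(|x_i-x_m|)$, and the cross sum, whose terms have a sign because the selected indices carry the $l$ smallest values. The genuine difference is how the time-dependent ordering is handled. The paper differentiates the sorted partial sums directly, as if the sorting permutation were locally constant; its entire proof is the display $\left(\sum_{i=1}^l w^k_i\right)'=I+II$ with ``$I=0$ by symmetry'' and ``$II\geq 0$ as long as $w^k_j-w^k_i>0$'', which leaves unaddressed what happens at times where velocities cross or tie and the sorted sum need not be differentiable --- a real concern here, since the whole point of the surrounding argument (Proposition \ref{zbieg}) is that velocities merge as $t\to T_0$. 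Your representation $S_l=\min_{|A|=l}\sum_{i\in A}w_i$ closes exactly that gap: $S_l$ is a finite minimum of $C^1$ functions, hence Lipschitz, the identity $S_l'=\sum_{i\in A}\dot w_i$ holds a.e.\ for any minimizing $A$, and minimality automatically yields the separation $w_i\leq w_m$ for $i\in A$, $m\notin A$, so no case analysis of ties is needed. In short, yours is a rigorized version of the paper's one-line argument: it costs the variational device and an appeal to a.e.\ differentiability of Lipschitz functions, and it buys validity at the crossing times the paper silently ignores. (You are also more faithful to the definition of $w$ by setting $a_{im}=0$ for $m\notin[i]$, whereas the paper's display sums over all $j$.) One small repair to your own setup: for pairs of particles that are identically stuck, $|x_i-x_m|\equiv 0$ is not bounded below on compact subintervals, but the paper's convention $\psi(0)=0$ makes those coefficients vanish, so your claim that the $a_{im}$ are bounded and $w\in C^1$ on compact subintervals of $[t_0,T_0)$ still stands.
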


\begin{proof}
We prove the assertion only for the first sum as the other differs only by sign. For all $l=1,...,N$, we have
\begin{eqnarray*}
\left(\sum_{i=1}^lw^k_i\right)^{'}=\sum_{i,j=1}^l(w^k_j-w^k_i)\psi(|x_i-x_j|)+ \sum_{i=1}^l\sum_{j=l+1}^N(w^k_j-w^k_i)\psi(|x_i-x_j|)=: I+II.
\end{eqnarray*}
By symetry $I=0$. On the other hand for $j>l$ as long as $w^k_j-w^k_i>0$, we have $II\geq 0$ and the sum $\sum_{i=1}^lw^j_i$ is nondecreasing.
\end{proof}

\noindent
Now we may proceed with our goal which is the following proposition.

\begin{prop}\label{zbieg}
If $i\sim j$ then 
\begin{eqnarray}\label{tion}
\lim_{t\to T_0}|w_i(t)-w_j(t)|=0.
\end{eqnarray}

\end{prop}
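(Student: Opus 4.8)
The plan is to prove that each $w_i(t)$ converges as $t\to T_0^-$ and then to match the limits across the equivalence class. Since convergence is checked coordinate by coordinate, I fix a coordinate $k$ and argue for the scalar functions $w_i^k$; once every coordinate converges, $\lim_{t\to T_0}w_i(t)=:w_i^*$ exists. Throughout I work with the indices of the fixed equivalence class, for which $w_i$ is governed by a self-contained C-S type system, so that both Lemma \ref{grav} and the analogue of property $4^{'}$ from Corollary \ref{properties2} (already noted for the $w_i$) apply.

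First I would show that all order statistics converge. For each $t\in[t_0,T_0)$ let $w_{(1)}^k(t)\le\cdots\le w_{(N)}^k(t)$ be the values $w_1^k(t),\dots,w_N^k(t)$ arranged increasingly; being order statistics of finitely many continuous functions, they are continuous on $[t_0,T_0)$. By Lemma \ref{grav} each partial sum $S_l^k(t):=\sum_{m=1}^l w_{(m)}^k(t)$ is nondecreasing, and $|S_l^k(t)|\le NM$ because the $w_i$ are bounded by $M$; hence $S_l^k$ is monotone and bounded and converges as $t\to T_0$. Taking differences, $w_{(l)}^k(t)=S_l^k(t)-S_{l-1}^k(t)$ (with $S_0^k\equiv 0$) converges to some $a_l^k$, and $a_1^k\le\cdots\le a_N^k$.

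Next I would upgrade convergence of the sorted values to convergence of each individual $w_i^k$. Let $b_1<\cdots<b_p$ be the distinct numbers among $a_1^k,\dots,a_N^k$; if $p=1$ every order statistic tends to the same value and we are done, so assume $p\ge 2$ and set $\eta:=\frac{1}{3}\min_{m\ne m'}|b_m-b_{m'}|$. Choose $\tau<T_0$ with $|w_{(l)}^k(t)-a_l^k|<\eta$ for all $l$ and all $t\in[\tau,T_0)$. Then every $w_i^k(t)$, being equal to some $w_{(l)}^k(t)$, lies in the union of the pairwise disjoint open intervals $(b_m-\eta,b_m+\eta)$; as $w_i^k$ is continuous on the connected set $[\tau,T_0)$, its image lies in a single such interval $(b_{m(i)}-\eta,b_{m(i)}+\eta)$. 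Consequently, whenever $w_i^k(t)=w_{(l)}^k(t)$ one must have $a_l^k=b_{m(i)}$, and since only finitely many indices $l$ satisfy $a_l^k=b_{m(i)}$ and all the corresponding $w_{(l)}^k$ tend to $b_{m(i)}$, it follows that $w_i^k(t)\to b_{m(i)}$. Carrying this out in every coordinate gives the existence of $w_i^*=\lim_{t\to T_0}w_i(t)$.

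Finally I would identify the limits. If $i\dot{\sim}j$, the lemma preceding Lemma \ref{grav} (providing, for $i\dot{\sim}j$, a sequence $s_n\to T_0$ along which $|w_i-w_j|\to 0$) gives $w_i(s_n)\to w_i^*$ and $w_j(s_n)\to w_j^*$, so passing to the limit yields $|w_i^*-w_j^*|=0$, i.e. $w_i^*=w_j^*$. For a general pair with $i\sim j$, transitivity of $\sim$ produces a chain $i=i_0\dot{\sim}i_1\dot{\sim}\cdots\dot{\sim}i_q=j$, along which these equalities compose to $w_i^*=w_j^*$. Hence $\lim_{t\to T_0}|w_i(t)-w_j(t)|=|w_i^*-w_j^*|=0$, which is (\ref{tion}). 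I expect the main obstacle to be the third paragraph: Lemma \ref{grav} controls only the sorted values, so the real work is ruling out that a single particle oscillates between two distinct limiting clusters, which is exactly where continuity together with the gaps $\eta$ enter.
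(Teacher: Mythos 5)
Your proposal is correct, but it takes a genuinely different route from the paper's. The paper argues by contradiction under the normalization (\ref{asum}): its Step 1 is a quantitative trapping estimate proved by induction (if $w_i(t)=\mathcal{R}(t)-\delta$, where $\mathcal{R}$ is the running maximum over the class, then $\sup_{s\in[t,T_0)}w_i\leq \mathcal{R}(t)-\delta/N!$), its Step 2 is a dichotomy (Lemma \ref{step2}, proof left to the reader), and its Step 3 iterates the trapping estimate to drive $\mathcal{R}$ and $\mathcal{L}$ arbitrarily close together, contradicting the persistent $\epsilon$-separation; individual convergence of the $w_i$ and the removal of (\ref{asum}) only come afterwards, in Remark \ref{drop}. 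You instead give a direct proof: Lemma \ref{grav} makes the partial sums of the order statistics monotone, boundedness (property $4'$ of Corollary \ref{properties2}, inherited by the $w_i$) makes them convergent, differencing gives convergence of each sorted value, and your continuity-plus-gap argument (connectedness of the image inside the disjoint $\eta$-intervals around the distinct limits $b_m$) rules out a single velocity oscillating between limiting clusters --- which is indeed the one delicate step, and you handle it correctly. The limits are then identified along $\dot{\sim}$-chains using the unnumbered lemma preceding Lemma \ref{grav}, i.e. the same ingredient that implicitly underlies the paper's Lemma \ref{step2}. Your route buys several things: it avoids the $N!$ induction and the contradiction framework, it needs neither Lemma \ref{step2} nor the normalization (\ref{asum}), and it proves the stronger statement that every $w_i$ has a limit at $T_0^-$, which is exactly what Remark \ref{drop} and Corollary \ref{no} require, so those come for free. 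The paper's route, in exchange, yields the quantitative bound $\delta/N!$ on how far a lagging velocity must stay below the maximum, but that extra information is not used elsewhere. One small point you already flag correctly: Lemma \ref{grav} and property $4'$ must be applied to the self-contained system on the fixed equivalence class (with $N$ read as its cardinality), which is also how the paper uses them, so no additional work is needed there.
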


\begin{proof}
It suffices to show that the assertion holds if we substitute $w_i$ with $w^k_i$ -- it's $k$-th coordinate, thus let us assume for simplicity of notation that $w_i=w^k_i$. Therefore $w_i$ are real functions and their sum equals to $0$ by (\ref{asum}). The proof follows by 3 steps.\\
{\sc Step 1.} For $t\in[t_0,T_0)$, let
\begin{eqnarray*}
\mathcal{R}(t):=\max_{j\in[i]}w_j(t).
\end{eqnarray*}
\noindent
First we prove that if at some point $t\in[t_0,T_0)$ we have
\begin{eqnarray}\label{d}
w_i(t)=\mathcal{R}(t) - \delta,
\end{eqnarray}
then
\begin{eqnarray}\label{obs}
\sup_{s\in[t,T_0)}w_i\leq \mathcal{R}(t) - \frac{\delta}{N!}.
\end{eqnarray}
This means that if some velocity $w_i$ is far away from the highest velocity at the time $t$, then the highest possible value for $w_i$ is significantly smaller than the highest velocity at the time $t$. The proof follows by induction with respect to the number of velocities $w_j$ that are bigger than $w_i$ at the time $t$. For $n=1$ we are in a situation when there is only one $w_j$, such that $\mathcal{R}(t)=w_j(t)>w_i(t)$ and (\ref{d}) implies that $\mathcal{R}(t)-w_i(t)=\delta$. Now let
\begin{eqnarray*}
p(s):=\max\{w_k(s): w_k(s)<\mathcal{R}(s)\},\ \ \  {\rm for}\ \ \ s\in[t,T_0)
\end{eqnarray*}
Clearly $p(t)=w_i(t)$ but it is possible that some other velocity may become bigger than $w_i$ at some point in time and this is the only reason to introduce the function $p$, which will serve us by pointing the right-hand edge of the set of velocities smaller than $\mathcal{R}$. Clearly $w_i\leq p\leq \mathcal{R}$ in $[t,T_0)$. Moreover Lemma \ref{grav} implies that the sum $p+\mathcal{R}$ is nonincreasing. Therefore
\begin{eqnarray*}
\mathcal{R}(t)\geq \mathcal{R}(s)+p(s)-p(t)\geq 2w_i(s) - w_i(t) =  2w_i(s) - \mathcal{R}(t)+\delta,
\end{eqnarray*}
which implies that
\begin{eqnarray*}
\sup_{s\in[t,T_0)}w_i\leq \mathcal{R}(t)-\frac{\delta}{2}.
\end{eqnarray*}
Now let us assume that condition (\ref{d}) implies that
\begin{eqnarray}\label{indasum}
\sup_{s\in[t,T_0)}w_i\leq \mathcal{R}(t)-\frac{\delta}{(n+1)!}
\end{eqnarray}
in case when at the time $t$ there are exactly $n$ velocities bigger than $w_i$. We will prove that this implies that if (\ref{d}) holds, then
\begin{eqnarray}\label{tez}
\sup_{s\in[t,T_0)}w_i\leq \mathcal{R}(t)-\frac{\delta}{(n+2)!}
\end{eqnarray}
if only there are exactly  $n+1$ velocities bigger than $w_i$ at the time $t$. In such case we define
\begin{eqnarray*}
p(s):=\max\{w_k(s): k\notin \mathcal{G}\},\ \ \ {\rm for}\ \ \ s\in[t,T_0),
\end{eqnarray*}
where $\mathcal{G}$ is the set of indexes of the $n+1$ biggest velocities at the time $t$. Here again the sole purpose of the function $p$ is to point the biggest velocity that was initialy smaller the the biggest $n+1$ velocities. Denoting $\sum_{k\in \mathcal{G}}w_k(s)=:S(s)$, by Lemma \ref{grav}, the function $S+p$ is nonincreasing as long as
\begin{eqnarray}\label{p}
p(s)< \min_{k\in \mathcal{G}}w_k(s),
\end{eqnarray}
thus
\begin{eqnarray*}
(n+2)p(s)< S(s)+p(s)\leq S(t)+p(t)= S(t)+\mathcal{R}(t) - \delta\leq (n+2)\mathcal{R}(t) - \delta,\\
p(s)<\mathcal{R}(t) - \frac{\delta}{n+2}
\end{eqnarray*}
as long as (\ref{p}) holds. So if at some time $s_0$, we have $p(s_0)=\mathcal{R}(t) - \frac{d}{n+2}$ then also $p(s_0)\geq \min_{k\in \mathcal{G}}w_k(s_0)$. At that point there are at most $n$ velocities bigger than $p$ and the distance between $p(s_0)$ and $\mathcal{R}(s_0)$ is no less than $\delta^{'}:=\frac{\delta}{n+2}$. Therefore by (\ref{indasum}), we have
\begin{eqnarray*}
p(s)\leq \mathcal{R}(s_0)-\frac{\delta^{'}}{(n+1)!}\leq \mathcal{R}(t)-\frac{\delta}{(n+2)!} \ \ \ {\rm for} \ \ \ s\in[s_0,T_0).
\end{eqnarray*}
This proves (\ref{tez}). Noticing that $n\leq N-1$ we get (\ref{obs}) and finish step 1.\\ 
{\sc Step 2.} Our next step is the following simple observation with the proof left for the reader.
\begin{lem}\label{step2}
If (\ref{tion}) does not hold, then there exists $\epsilon>0$ and a sequence $s_n\to T_0$, such that for all $i\in [i]$ there exists $j\in[i]$, such that
\begin{eqnarray*}
|w_i(s_{n_k})-w_j(s_{n_k})|\to 0\ \ \ {\rm and}\ \ \ |w_i(s_{n_l})-w_j(s_{n_l})|>\epsilon
\end{eqnarray*}
for some subsequences $\{s_{n_k}\},\{s_{n_l}\}\subset \{s_n\}$.
\end{lem}

{\sc Step 3.} Now we finish the prove. Let us fix $t\in[t_0,T_0)$ and assume that $w_i$ is one of the biggest velocities at the time $t$ i.e. $\mathcal{R}(t)=w_i(t)$. Lemma \ref{step2} ensures existence of $j$, such that 
\begin{eqnarray}\label{seconv}
|w_i-w_j|\to 0
\end{eqnarray}
on one subsequence converging to $T_0$ and
\begin{eqnarray}\label{dieps}
|w_i-w_j|>\epsilon
\end{eqnarray}
on some other subsequence converging to $T_0$ for $\epsilon$ independent of $i$ and $j$. Thus (\ref{dieps}) implies that at some time $s\in[t,T_0)$ either $w_i$ or $w_j$ (say $w_j$) is farther from $\mathcal{R}(t)$ than $\epsilon$. Then step 1 implies that
\begin{eqnarray*}
\sup_{\theta\in[s,T_0)}w_j\leq \mathcal{R}(t)-\frac{\epsilon}{N!}.
\end{eqnarray*}
Moreover (\ref{seconv}) implies that at some other time $r\in[s,T_0)$, we have
\begin{eqnarray*}
w_i(r)\leq \mathcal{R}(t)-\frac{\epsilon}{2N!}
\end{eqnarray*}
and after that point (again by step 1)
\begin{eqnarray*}
\sup_{\theta\in[r.T_0)}w_i\leq \mathcal{R}(t)-\frac{\epsilon}{(2N!)^2}.
\end{eqnarray*}
This procedure can be performed with any velocity $w_i$ that at some time equals to $\mathcal{R}$ as many times as we want. Therefore we may make sure that $\mathcal{R}(t)$ is arbitrarily small at some time $t<T_0$. The same can be done with $\mathcal{L}(t):=\min_{j\in[i]}w_j(t)$ to conclude that diamaterer of velocities converges to $0$ as $t\to T_0$ and this contradicts (\ref{dieps}) and by Lemma \ref{step2} implies that assertion (\ref{tion}) is true. This finishes the proof.
\end{proof}

\begin{rem}\rm\label{drop}
In Proposition \ref{zbieg} we proved that for all $i\in[i]$ we have $w_i\to 0 $. However this was under our assumption (\ref{asum}). Now it is time to drop this assumption and conclude that in general there exists a constant $\bar{v}$, such that for all $i\in[i]$, we have $w_i\to \bar{v} $.
\end{rem}
\noindent
Our last goal in this subsection is to clarify what does Proposition \ref{zbieg} imply to the motion of $v$.

\begin{coro}\label{no}
There exists a constant $\bar{v}$, such that for all $i\in[i]$, we have
\begin{eqnarray*}
\lim_{t\to T_0^-}v_i(t)=\bar{v}.
\end{eqnarray*}
\end{coro}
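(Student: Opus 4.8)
The plan is to transfer the convergence of the auxiliary velocities $w_i$, established in Proposition~\ref{zbieg} and Remark~\ref{drop}, to the true velocities $v_i$ by means of the closeness estimate of Proposition~\ref{podob}. The one genuinely new point is that the common limit produced for the $w$-system is attached to the base point $t_0$ used to define $w_i=w_i^{t_0}$, so I must check separately that this family of limits converges as $t_0\to T_0$.

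First I would fix $t_0<T_0$ and recall that, by Remark~\ref{drop} applied to $w_i=w_i^{t_0}$, there is a constant $\bar v^{t_0}$ with $w_i^{t_0}(t)\to\bar v^{t_0}$ as $t\to T_0$ for every $i\in[i]$. Since the $w$-system couples only indices inside $[i]$, the symmetrisation used for property~2 of Corollary~\ref{properties2} shows that $\sum_{k\in[i]}w_k^{t_0}(t)$ is constant in $t$; letting $t\to T_0$ and writing $m$ for the number of elements of $[i]$ identifies
\[ \bar v^{t_0}=\frac{1}{m}\sum_{k\in[i]}v_k(t_0). \]
Combining the convergence of $w_i^{t_0}$ with Proposition~\ref{podob} then gives, for each $i\in[i]$,
\[ \limsup_{t\to T_0}|v_i(t)-\bar v^{t_0}|\le\omega(T_0-t_0). \]

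Next I would prove that $\bar v^{t_0}=\frac{1}{m}\sum_{k\in[i]}v_k(t_0)$ has a limit as $t_0\to T_0$. On $(0,T_0)$ the limit solution is classical and satisfies (\ref{podzial}); summing over $k\in[i]$ and splitting each inner sum into intra-class and cross-class indices, the intra-class contribution cancels by antisymmetry and there remains
\[ \frac{d}{dt}\,\frac{1}{m}\sum_{k\in[i]}v_k=\frac{1}{mN}\sum_{k\in[i]}\sum_{l\notin[i]}(v_l-v_k)\,\psi(|x_k-x_l|). \]
By property~$4'$ of Corollary~\ref{properties2} the differences $v_l-v_k$ are bounded by $2M$, while by Remark~\ref{inti} every weight $\psi(|x_k-x_l|)$ with $l\notin[i]$ is integrable on $[t_0,T_0]$; hence the right-hand side is integrable up to $T_0$, the class average is absolutely continuous up to $T_0$, and the limit $\bar v:=\lim_{t_0\to T_0}\bar v^{t_0}$ exists.

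Finally I would close the argument by a double-limit estimate. Given $\varepsilon>0$, using $\omega(0)=0$ and the convergence $\bar v^{t_0}\to\bar v$ I choose $t_0$ so close to $T_0$ that $\omega(T_0-t_0)<\varepsilon/2$ and $|\bar v^{t_0}-\bar v|<\varepsilon/2$; the displayed $\limsup$ bound then yields
\[ \limsup_{t\to T_0}|v_i(t)-\bar v|\le\omega(T_0-t_0)+|\bar v^{t_0}-\bar v|<\varepsilon \]
for every $i\in[i]$, and letting $\varepsilon\to0$ gives $\lim_{t\to T_0^-}v_i(t)=\bar v$. I expect the middle step --- showing that the limit $\bar v$ is well defined --- to be the main obstacle, since it is the only place where one must exploit that the inter-cluster communication weights stay integrable near $T_0$ (Remark~\ref{inti}); the transfer from $w$ to $v$ and the concluding $\varepsilon$-argument are routine.
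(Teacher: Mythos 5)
Your proposal is correct, and its core is the same as the paper's: transfer the convergence of the auxiliary velocities $w_i^{t_0}$ (Proposition \ref{zbieg}, Remark \ref{drop}) to $v_i$ through the closeness estimate of Proposition \ref{podob}, then conclude with an $\epsilon$-argument. The genuine difference is the middle step you flag as the main obstacle, and here your version is actually \emph{more} careful than the paper's. The published proof simply fixes $t_0$ with $\omega(T_0-t_0)<\epsilon/2$ and writes $|v_i(s)-\bar v|\leq|v_i(s)-w_i^{t_0}(s)|+|w_i^{t_0}(s)-\bar v|$, tacitly using for $\bar v$ the limit of the system $w^{t_0}$ attached to that particular $t_0$; it never addresses that this limit a priori depends on $t_0$, whereas the constant $\bar v$ in the statement must not. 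Your identification $\bar v^{t_0}=\frac{1}{m}\sum_{k\in[i]}v_k(t_0)$ via conservation of the class sum, together with the observation that the class average of $v$ is absolutely continuous up to $T_0$ (cross-class weights integrable by Remark \ref{inti}, velocity differences bounded by $2M$ by property $4'$ of Corollary \ref{properties2}), closes exactly this gap, so your argument is a legitimate strengthening rather than a detour. It is worth noting there is an even cheaper repair: your bound $\limsup_{t\to T_0}|v_i(t)-\bar v^{t_0}|\leq\omega(T_0-t_0)$, valid for every $t_0$, already shows that the oscillation of $v_i$ near $T_0$ is at most $2\omega(T_0-t_0)$, hence zero; so $\lim_{t\to T_0^-}v_i(t)$ exists by a Cauchy argument, all these limits within the class coincide, and $\bar v^{t_0}\to\bar v$ follows automatically, without ever computing the class average. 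Either way, the conclusion stands.
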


\begin{proof}
Given $\epsilon>0$ we need to ensure existence of $s_0<T_0$, such that for all $s_0<s<T_0$, we have
\begin{eqnarray*}
|v_i(s)-\bar{v}|<\epsilon 
\end{eqnarray*}
Let $t_0$ be such that $\omega(T_0-t_0)<\frac{\epsilon}{2}$, where $t_0$ and $\omega$ are as in Proposition \ref{podob}. By Remark \ref{drop} there exists $s_0\in[t_0,T_0)$, such that for all $s\in[s_0,T_0)$, we have
\begin{eqnarray*}
|v_i(s)-\bar{v}|\leq |v_i(s)-w^{t_0}_i(s)|+|w^{t_0}_i(s)-\bar{v}|<\omega(T_0-t_0)+\frac{\epsilon}{2}<\epsilon.
\end{eqnarray*}
\end{proof}
\noindent
This finally proves that the function $v$ has a limit at $T_0^-$ and we may extend it continuously to $[0,T_0]$. For the sake of clarity of argumentation in the next section, it is useful to summarise what we proved in this section.
\begin{rem}\rm\label{sum}
We actually proved that
\begin{enumerate}
\item If
\begin{eqnarray*}
\int_t^{T_0}\psi(|x_i-x_j|)ds = \infty
\end{eqnarray*}
for all $t<T_0$, then $v_i(t)-v_j(t)\to 0$ as $t\to T_0$
\item If on the other hand $v_i$ is separated from $v_j$ in a left sided neighborhood of $T_0$ then we have
\begin{eqnarray}\label{omega}
\int_t^{T_0}\psi(|x_i-x_j|)ds < \infty
\end{eqnarray}
\item Condition (\ref{omega}) holding for all $i$ and $j\in B_i(0)$ implies that all functions $v_i$ are in fact weak solutions of $(\ref{cs})_2$ in $W^{1,1}([0,T_0])$ and in particular admit a modulus of continuity and are uniformly continuous at $T_0^-$. Therefore again there exists a limit of $v_i$ at $T_0$ but it does not necessarily equal to a limit of $v_j$ for any $j\in B_i(0)$. 

\end{enumerate}
\end{rem}

\subsection{Global existence}
In this section we combain our efforts from sections \ref{exi} and \ref{clus} to obtain global existence in the sense of Definition \ref{sol}. Propositions \ref{odc} and \ref{spelnia} ensure existence of weak solutions in $[0,T_0)$ with a continuous velocity $v$. Corollary \ref{no} implies that there exists a left sided limit of $v$ at $T_0$. Thus it may be extended continuously to $[0,T_0]$. Therefore for arbitrary initial data there exists a unique solution $x\in C^{1}([0,T_0])^{dN}$ satisfying Definition \ref{sol}. Now assuming that $T_0$ is a new initial point with initial data equal to $x(T_0)$ and $v(T_0)$ and aplying Proposition \ref{odc} we conclude that the solution exists on $[0,T_n]$, where $T_n$ is $n+1$-th time at which some particles collide. What is not clear however is whether for arbitrary $T$ we may find $T_n\geq T$. 

\begin{prop}\label{exif}
For all $T>0$ there exists a $(C^1([0,T]))^{Nd}$ solution of (\ref{cs1}) with arbitrary initial data. This solution is in the sense of Definition \ref{sol}.
\end{prop}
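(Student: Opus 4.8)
The plan is to upgrade the inductive construction sketched just above the statement into a global one by proving that in every bounded interval $[0,T]$ the set of collision times is \emph{finite}. Once this is granted, one enumerates the collisions as $0\le T_0<T_1<\dots<T_m\le T$, applies Proposition \ref{odc}, Proposition \ref{spelnia} and Corollary \ref{no} on each slab $[T_k,T_{k+1}]$ (the left-sided limit of $v$ at each $T_k$ supplied by Corollary \ref{no} providing admissible initial data for the next slab), while on the final slab $[T_m,T]$ no collision occurs, so the right-hand side of $(\ref{cs})_2$ stays Lipschitz and $x$ is even smooth there. Concatenating these pieces produces a function $x\in(C^1([0,T]))^{Nd}$ which is a weak $(W^{2,1})^{Nd}$ solution between consecutive collisions, i.e. a solution in the sense of Definition \ref{sol}. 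Thus the entire proposition reduces to the finiteness statement.

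Two kinds of collisions must be counted separately. First, \emph{sticking} collisions, at which the colliding particles also agree in velocity: by property $6$ carried over in Corollary \ref{properties2}, stuck particles never separate, so the partition of $\{1,\dots,N\}$ into maximal groups of mutually coincident trajectories can only coarsen in time; starting from at most $N$ groups and never dropping below one, at most $N-1$ sticking collisions can occur on the whole half-line. It therefore suffices to bound the \emph{crossing} collisions, at which $x_p(T_*)=x_q(T_*)$ but $v_p(T_*)\ne v_q(T_*)$.

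The heart of the matter is ruling out a Zeno accumulation of crossings. Suppose, for contradiction, that $[0,T]$ contains infinitely many collisions. Since there are only $\binom{N}{2}$ pairs, some fixed pair $(p,q)$ collides at times $t_k\nearrow T_*$ for some $T_*\le T$, and because they never stick before $T_*$ these are all crossings. I would first argue that $v$ extends continuously to $T_*$ (combining the continuity at each collision from Corollary \ref{no} with the uniform H\"older modulus of Lemma \ref{equi} and the bound $\|v_i\|_\infty\le M$), so that $v_p(T_*)-v_q(T_*)$ exists. A transversality argument then forces it to vanish: if $v_p(T_*)-v_q(T_*)=w\ne0$, then since $x_p(T_*)=x_q(T_*)$ one has $x_p(t)-x_q(t)=-\int_t^{T_*}(v_p-v_q)\,ds\approx (t-T_*)w\ne0$ for $t$ near $T_*$, so $x_p-x_q$ cannot vanish along the sequence $t_k\nearrow T_*$. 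Hence $v_p(T_*)=v_q(T_*)$ and $x_p(T_*)=x_q(T_*)$, i.e. $(p,q)$ \emph{stick} at $T_*$.

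It then remains to contradict the infinitude of the crossings of $(p,q)$ in $[0,T_*)$, and this I expect to be the main obstacle. The idea is to exploit the overdamped structure of the relative motion $y=x_p-x_q$, $u=v_p-v_q$. Isolating the mutual interaction in $(\ref{cs})_2$ gives $\dot u=-\tfrac{2}{N}\psi(|y|)\,u+F$, where $F$ collects the contributions of the remaining particles; since $x_p\approx x_q$ near $T_*$ the terms of $F$ nearly cancel in pairs and $F$ is of order $|y|+|u|$, hence negligible. For the unperturbed equation $\dot u=-\tfrac{2}{N}\psi(|y|)u$ the damping coefficient is scalar, so $u$ keeps a fixed direction with decreasing length, whence $y$ moves monotonically along a line and meets the origin at most once. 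I would make this rigorous by bounding the cumulative rotation of $u$ caused by $F$ (using $|y|,|u|\to0$ and the singular damping $\int^{T_*}\psi(|y|)=\infty$ coming from the sticking just established), so that $y$ is eventually monotone in its dominant direction and has only finitely many zeros near $T_*$ --- contradicting $t_k\nearrow T_*$. Consequently no pair can cross infinitely often, crossings are finite, and together with the $N-1$ bound on sticking collisions this yields the finiteness statement and hence the proposition.
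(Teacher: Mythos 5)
Your first two steps --- the bound of $N-1$ on sticking collisions, the pigeonhole selection of a pair $(p,q)$ colliding at $t_k\nearrow T_*$, and the transversality argument forcing $v_p(T_*)=v_q(T_*)$ --- essentially reproduce the paper's own proof, which also shows that an accumulation point of collision times must be a sticking time. One caveat on your route: you get the existence of the limit of $v_p-v_q$ at $T_*$ from Corollary \ref{no} and the H\"older modulus of Lemma \ref{equi}, but that modulus (both the constant and the length of the interval on which it holds) is renewed after each restart and may degenerate along infinitely many collisions $t_k\nearrow T_*$, so this does not give uniform control up to $T_*$. The paper instead runs a dichotomy through Remark \ref{sum}: either $\int^{T_*}\psi(|x_p-x_q|)\,ds=\infty$, in which case $v_p-v_q\to 0$ outright, or the integral is finite, in which case $v_p,v_q$ are $W^{1,1}$ up to $T_*$, hence uniformly continuous, and then exactly your transversality argument applies. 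With that substitution, this part of your proposal is sound.

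The genuine gap is your final step, and it is precisely the step the paper never takes. The paper does \emph{not} prove that collisions are finite in number on $[0,T]$: having shown that the accumulation time is a sticking time, it simply restarts the solution there with the stuck pair merged (recall $(\ref{cs})_2$ sums only over $B_i$, and property 6 of Corollary \ref{properties2} keeps stuck trajectories together), and observes that each accumulation event consumes one of the at most $N-1$ possible stickings; no control whatsoever on the number of crossings before the sticking time is needed. Your plan, by contrast, reduces the whole proposition (by your own words) to finiteness of crossings near $T_*$, and your argument for that is only heuristic: (i) the estimate $F=O(|y|+|u|)$ requires the remaining particles to stay uniformly away from the pair near $T_*$ --- if a third particle also collides there, $\psi(|x_p-x_k|)-\psi(|x_q-x_k|)$ is not $O(|y|)$ and the pairwise cancellation fails; (ii) the claimed ``singular damping'' $\int^{T_*}\psi(|y|)\,ds=\infty$ does not follow from sticking --- Remark \ref{sum} gives the reverse implication (divergence of the integral forces the velocities to merge), not this one; (iii) the passage from ``bounded cumulative rotation of $u$'' to ``finitely many zeros of $y$'' is nowhere carried out, and with $u\to 0$, $y\to 0$ it is delicate. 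So the proposal is incomplete at its load-bearing step. The repair is to abandon the finiteness goal: after your (correct, once rerouted through Remark \ref{sum}) conclusion that $T_*$ is a sticking time, restart the solution at $T_*$ as the paper does and count stickings; your concatenation argument then finishes the proof. It is fair to add that literal compliance with the clause ``for some $n$, $T\leq T_n$'' of Definition \ref{sol} would in fact require your stronger finiteness claim, a point the paper itself glosses over in the accumulation scenario --- but as it stands you have flagged, not closed, that gap.
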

\begin{proof}
It suffices to show that we may extend our solution up to an arbitrary $T>0$. Let $T_n$ be a sequence of the points of collision and assume by contradiction that $\sum_n(T_n-T_{n-1})<\infty$. Then $T_n-T_{n-1}\to 0$ and $T_n\to \tilde{T}$ for some $\tilde{T}>0$. We will prove that $\tilde{T}$ is a point of sticking for some particles $x_i$ and $x_j$. Clearly there exist $i$ and $j\in B_i(0)$ and a subsequence $T_{n_k}$, such that $x_i(T_{n_k})-x_j(T_{n_k})=0$, which by Lipschitz continuity of $x$ (property $4^{'}$ from Corollary \ref{properties2}) implies that $x_i(t)-x_j(t)\to 0$ as $t\to\tilde{T}$ and $\tilde{T}$ is a point of collision of $x_i$ and $x_j$. Now it remains to show that $v_i(t)-v_j(t)\to 0$ as $t\to\tilde{T}$. If
\begin{eqnarray}
\int_t^{\tilde{T}}\psi(|x_i-x_j|)ds = \infty
\end{eqnarray}
for all $t<\tilde{T}$, then by Remark \ref{sum},1 we are done. On the other hand if $\psi(|x_i-x_j|)$ is integrable in a left sided neighborhood of $\tilde{T}$ then by Remark \ref{sum},3, velocity $v$ is uniformly continuous at $\tilde{T}^-$ and in particular has a limit at $\tilde{T}$. Therefore there exists a limit of $v_i-v_j$ at $\tilde{T}$. If this limit equals to $0$ then, again, we are done. If on the other hand it equals to some $\xi\neq 0$, then in a neigborhood of $\tilde{T}$ we have $v_i-v_j\in B(\xi,\epsilon)$, where $B(\xi,\epsilon)$ is a ball centered at $\xi$ with an arbitrary small radius $\epsilon$. This implies a clearly false statement that 
\begin{eqnarray*}
0=x_i(T_{n_{k+1}})-x_j(T_{n_{k+1}})\in (T_{n_{k+1}}-T_{n_k})B(\xi,\epsilon)
\end{eqnarray*}
with $T_{n_k}$ and $T_{n_{k+1}}$ sufficiently close to $\tilde{T}$. This contradicts the assumption that $\xi\neq 0$. Altogether we proved that if $\tilde{T}$ is a density point for $T_n$ then it is a point of sticking of the particles. Then we may further extend our solution beyond $\tilde{T}$. Finally, since there can be at most $N-1$ times of sticking, then for all $T>0$ either we can find $T_n$ such that $T_n>T$ or all the particles stick together before time $T$ and travel with constant velocity for as long as we want them to.
\end{proof}

\section{On the case of two particles -- flocking in a finite time}
In this section our goal is to discuss the possibility of a finite in time alignment in case of two particles ($N=2$). First let us recall that asymptotic flocking was studied before in most papers mantioned in the introduction, see e.g. \cite{haliu} and we refer to those papers to see general definitions and results. Here, we consider the most strict form of flocking, which is sticking of the trajectories of the particles in a finite time. By property 2 from Corollary \ref{properties2} the avarage velocity of the particles is constant, which means that
\begin{eqnarray*}
v_1\equiv -v_2+\bar{v}
\end{eqnarray*}  for some constant $\bar{v}$. Without a loss of generality we may assume that $\bar{v}=0$. The above observation implies that
\begin{eqnarray*}
x_1(t)=-x_2(t)+t\bar{v}+(x_1(0)+x_2(0))
\end{eqnarray*}
 and assuming without a loss of generality that also $x_1(0)=-x_2(0)$, we get $x_1\equiv-x_2$. Thus both, avarage velocity and the center of mass of the particles are equal to $0$. Therefore the particles move parallely to each other, either on two separate parallel lines or on the same line. In the former case, the distance between particles is always no less than the distance of respective lines, thus there is no possibility of a finitie in time (or asymptotic for that matter) alignment. In the latter case the distance between particles can by arbitrarily small, thus hypothetically a finite in time alignment may occur. In order to simplify our calculations, since particles move on the same line, then by a simple change of variables we may assume that $d=1$. Altogether we have two particles $x_1$ and $x_2$, with $x_1\equiv-x_2$ and $v_1\equiv -v_2$. Therefore they are unequivocally defined by the function
\begin{eqnarray*}
\phi(t):=x_2(t)-x_1(t).
\end{eqnarray*}
Then the C-S model (\ref{cs}) (or (\ref{cs1}) and (\ref{cs2}), since in this case they are the same) can be rewritten equivalently as
\begin{eqnarray}\label{raz}
\ddot{\phi}(t)=-2\dot{\phi}(t)\psi(|\phi(t)|),
\end{eqnarray}
with $\phi(0)=x_2(0)-x_1(0)\geq 0$ and $\dot{\phi}(0)=v_2(0)-v_1(0)\in\r$. Moreover Lemma \ref{grav} implies that if at some time $t$ we have $\dot{\phi}(t)=0$ then it will be constantly equal to $0$ from that point in time. This implies that $\dot{\phi}$ may not change sign and this farther implies that there may be at most one collision of the particles. 
Finally let us notice that by Proposition \ref{exif} there exists a solution to (\ref{raz}) with arbitrary initial data and we can easly prove that if $\phi(0)>0$, then this solutions is unique. Now we are ready to state our main result of this section.

\begin{prop}\label{stick}
Let $\phi$ be a solution of (\ref{raz}) with $\phi(0)>0$. Then the following are equivalent:
\begin{enumerate}
\item There exists a time $t_0<\infty$ such that $\phi(t_0)=\dot{\phi}(t_0)=0$.
\item Initial data satisfy:
\begin{eqnarray}\label{piec}
\dot{\phi}(0)=-2\Psi(\phi(0)),
\end{eqnarray}
where $\Psi(s):=\frac{1}{1-\alpha}s^{1-\alpha}$ is a primitive of $\psi$.

\end{enumerate}
\end{prop}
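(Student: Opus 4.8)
The plan is to collapse equation (\ref{raz}) into a first-order conservation law that holds as long as the two particles have not yet collided, and then to read off both implications from that law. Since $\Psi'=\psi$ and $\phi>0$ on every interval preceding the first collision, one has $\frac{d}{dt}\Psi(\phi)=\psi(\phi)\dot\phi$, so (\ref{raz}) rewrites as $\ddot\phi=-2\frac{d}{dt}\Psi(\phi)$. Integrating once produces the conserved quantity
\[
E(t):=\dot\phi(t)+2\Psi(\phi(t))\equiv\dot\phi(0)+2\Psi(\phi(0))=:C
\]
on the interval $[0,T_0)$ on which $\phi$ remains positive, $T_0$ being the first collision time. Because $\Psi$ is continuous with $\Psi(0)=0$ and the solution is $C^1$ up to $T_0$ (Corollary~\ref{no}), letting $t\to T_0^-$ gives $C=\dot\phi(T_0)$. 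The key observation is that condition~2, i.e.\ $\dot\phi(0)=-2\Psi(\phi(0))$, is literally the statement $C=0$, so the whole proposition amounts to showing that $C=0$ is equivalent to the existence of a finite $t_0$ with $\phi(t_0)=\dot\phi(t_0)=0$.

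For the implication $2\Rightarrow1$ I would put $C=0$, so that $\dot\phi=-2\Psi(\phi)=-\frac{2}{1-\alpha}\phi^{1-\alpha}$ while $\phi>0$; in particular $\dot\phi<0$ and $\phi$ is strictly decreasing. This is separable, and computing $\frac{d}{dt}\phi^{\alpha}=-\frac{2\alpha}{1-\alpha}$ shows that $\phi^{\alpha}$ decreases linearly and hence $\phi$ vanishes at the finite time $t_0=\frac{1-\alpha}{2\alpha}\phi(0)^{\alpha}$. At that instant $\phi(t_0)=0$, and by continuity of $\dot\phi$ one gets $\dot\phi(t_0)=\lim_{t\to t_0^-}\bigl(-2\Psi(\phi(t))\bigr)=-2\Psi(0)=0$, which is exactly condition~1.

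For the converse $1\Rightarrow2$ I would show that the collision point supplied by condition~1 must be the first zero of $\phi$. Set $t_*:=\inf\{t>0:\phi(t)=0\}$; since $\phi(0)>0$ and $\phi$ is continuous we have $t_*>0$, $\phi>0$ on $[0,t_*)$, and the conservation law gives $C=\dot\phi(t_*)$. Assume for contradiction that $C\neq0$, so $\dot\phi(t_*)\neq0$. Recall that Lemma~\ref{grav} forces $\dot\phi$ to keep a constant sign and to remain identically zero once it vanishes; as $\phi$ drops from $\phi(0)>0$ to $\phi(t_*)=0$, that sign is negative, so $\phi$ is strictly decreasing and becomes strictly negative just after $t_*$. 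Consequently $\phi$ never returns to $0$, and wherever $\dot\phi$ might later vanish we have $\phi\neq0$; hence no time $t_0$ with $\phi(t_0)=\dot\phi(t_0)=0$ can exist, contradicting condition~1. Therefore $C=0$, which is condition~2.

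The routine ingredient is the explicit integration of the separable equation and the finite-time estimate $t_0=\frac{1-\alpha}{2\alpha}\phi(0)^{\alpha}$. The delicate step is the converse direction: there I must combine the $C^1$-regularity at the collision (so that $E$ extends continuously and $C=\dot\phi(t_*)$) with the constant-sign behaviour of $\dot\phi$ coming from Lemma~\ref{grav}, in order to exclude the ``pass-through'' scenario in which $\phi$ would cross zero with nonzero velocity and only later halt at a nonzero separation. I expect this sign-and-monotonicity bookkeeping, rather than any estimate, to be the main obstacle.
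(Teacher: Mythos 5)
Your proof is correct, and it shares the paper's central idea --- the first integral $\dot\phi(t)+2\Psi(\phi(t))=\dot\phi(0)+2\Psi(\phi(0))$, which is exactly the paper's identity (\ref{trzy}) --- but the two implementations diverge in an instructive way. For $1\Rightarrow 2$ the paper argues directly: the constant-sign property of $\phi$ and $\dot\phi$ (the same consequence of Lemma \ref{grav} you invoke) lets it keep $|\phi|=\phi$ up to the sticking time and simply substitute $t=t_0$ into (\ref{trzy}); your contradiction argument at the first zero $t_*$, ruling out the pass-through scenario, uses the same ingredients and is equally sound, just organized in reverse. The genuine difference is in $2\Rightarrow 1$: the paper writes $\phi(t)=e^{-\frac{2}{1-\alpha}\int_0^t\psi(\phi(s))ds}\phi(0)$, introduces dyadic times $t_n$ with $\phi(t_n)=2^{-n}$, and bounds $t_n-t_{n-1}$ by a convergent geometric series to conclude that the collision time is finite; you instead integrate the separable equation exactly, observing $\frac{d}{dt}\phi^\alpha=-\frac{2\alpha}{1-\alpha}$, so $\phi^\alpha$ decays linearly and vanishes precisely at $t_0=\frac{1-\alpha}{2\alpha}\phi(0)^\alpha$. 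Your computation is more elementary and buys strictly more: an exact sticking time rather than a summability bound, whereas the paper's exponential/dyadic estimate is the kind of argument that would survive replacing $\psi$ by a communication weight for which the reduced ODE is not explicitly integrable. One small point worth making explicit in your write-up: when you let $t\to t_*^-$ in the conservation law to conclude $C=\dot\phi(t_*)$, the continuity of $\dot\phi$ at the collision is precisely what Corollary \ref{no} (together with Definition \ref{sol}) supplies, so your citation is the right one and should be stated as the justification at that step.
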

\begin{proof}
Since there is at most one collision of the particles and we know that they stick together, thus $\phi$ and $\dot{\phi}$ have constant signs. Therefore since $\phi(0)>0$ then also $|\phi|=\phi$ and by simple integration of (\ref{raz}) we conclude that the function $\phi$ satisfies:
\begin{eqnarray}\label{trzy}
\dot{\phi}(t)=-2\Psi(\phi(t))+2\Psi(\phi(0))+\dot{\phi}(0)
\end{eqnarray}
\noindent
and
\begin{eqnarray}\label{cztery}
\phi(t)=-2\int_0^t\Psi(\phi(s))ds + t(2\Psi(\phi(0))+\dot{\phi}(0))+ \phi(0).
\end{eqnarray}

Substituing $t$ with $t_0$ in (\ref{trzy}) we obtain
\begin{eqnarray*}
0=2\Psi(\phi(0))+\dot{\phi}(0),
\end{eqnarray*}
which is exatly condition (\ref{piec}). Now let as assume that (\ref{piec}) is satisfied. We are going to prove existence of $t_0$. First note that in our case (\ref{trzy}) and (\ref{cztery}) are satisfied on the set $\{t:\phi(t)\geq 0\}$ and they have the following form:
\begin{eqnarray}
\dot{\phi}(t)&=&-2\Psi(\phi(t)),\label{szesc}\\
\phi(t)&=&-2\int_0^t\Psi(\phi(s))ds + \phi(0).\nonumber
\end{eqnarray}
From (\ref{szesc}) and by the definision of $\psi$ we obtain
\begin{eqnarray*}
\dot{\phi}(t)=-\frac{2}{1-\alpha}\phi(t)\psi(\phi(t))
\end{eqnarray*}
and
\begin{eqnarray*}
\phi(t)=e^{-\frac{2}{1-\alpha}\int_0^t\psi(\phi(s))ds}\phi(0).
\end{eqnarray*}
Thus, since $\max_{t\in[0,t_0]}\phi(t)=\phi(0)$, we have
\begin{eqnarray*}
\phi(t)\leq e^{-\frac{2}{1-\alpha}t\psi(\phi(0))}\phi(0),
\end{eqnarray*}
which can become arbitrarily small in a finite time. Now for $n=2,3,...$ let
\begin{eqnarray*}
t_n:=\inf\{t>t_{n-1}:\phi(t)\leq 2^{-n}\},
\end{eqnarray*}
with $t_1:=0$. We have
\begin{eqnarray*}
\phi(t_n)&=&e^{-\frac{2}{1-\alpha}\int_{t_{n-1}}^{t_n}\psi(\phi(s))ds}\phi(t_{n-1}),\\
2^{-1}&=&e^{-\frac{2}{1-\alpha}\int_{t_{n-1}}^{t_n}\psi(\phi(s))ds},\\
\ln 2&=&\frac{2}{1-\alpha}\int_{t_{n-1}}^{t_n}\psi(\phi(s))ds\geq \frac{2}{1-\alpha}(t_n-t_{n-1})2^{\alpha(n-1)}.
\end{eqnarray*}
Therefore
\begin{eqnarray*}
(t_n-t_{n-1})\leq \frac{(1-\alpha)\ln 2}{2}2^{\alpha(1-n)}
\end{eqnarray*}
and $t_n$ is a partial sum of a convergent series. Thus $t_n$ converges to a finite limit $t_0$ such that $\phi(t_0)=\dot{\phi}(t_0)=0$.
\end{proof}

\begin{rem}\rm
Finally let us mention that a finite in time allignemnt may not happen in case of $\psi_{cs}$ defined by (\ref{cucu}) since (\ref{raz}) implies that
\begin{eqnarray*}
|\dot{\phi}(t)|=e^{-2\int_0^t\psi_{cs}(|\phi(t)|)ds}\dot{\phi}(0)\geq e^{-2t\|\psi_{cs}\|_\infty}|\dot{\phi}(0)|>0,
\end{eqnarray*}
as long as $\dot{\phi}(0)\neq 0$. Moreover we may just as easly prove that with unintegrable singular weight $\psi$, e.g. when $\psi(s)=s^{-\alpha}$ for $\alpha>1$ in one dimensional setting not only particles cannot stick but they cannot even collide.
\end{rem}

\noindent
Conditions described in Proposition \ref{stick} refer to the function $\phi$ and in a simplified case of one dimension. However they can be modified to cover more general cases and refer directly to $x_1$ and $x_2$.\\
\noindent
{\bf Acknowledgement.} This work was partialy supported by MNiSW grant no. IdP2011 000661.

{\small

}
\end{document}